\newtheorem{thm}{Theorem}[section]
\newtheorem{lem}[thm]{Lemma}
\newtheorem{obs}[thm]{Observation}
\newtheorem{claim}{Claim}
\begin{document}

\title{The degree threshold for covering with all the connected $3$-graphs with $3$ edges
}
\author{Yue Ma$^a$ \quad Xinmin Hou$^{b,c}$ \quad Zhi Yin$^b$\\
\small $^{a}$School of Mathematics and Statistics,\\
\small Nanjing University of Science and Technology,\\
\small Nanjing, Jiangsu 210094, China.\\
\small $^{b}$ School of Mathematical Sciences,\\
\small University of Science and Technology of China,\\
\small Hefei, Anhui 230026, China.\\
\small $^{c}$ CAS Key Laboratory of Wu Wen-Tsun Mathematics,\\
\small University of Science and Technology of China,\\
\small Hefei, Anhui 230026, China.\\
\small $^a$yma@njust.edu.cn; $^b$xmhou@ustc.edu.cn, yinzhi@mail.ustc.edu.cn
}

\date{}

\maketitle

\begin{abstract}
Given two $r$-uniform hypergraphs $F$ and $H$, we say that $H$ has an $F$-covering if every vertex in $H$ is contained in a copy of $F$. 
Let $c_{i}(n,F)$ be the least integer such that every $n$-vertex $r$-graph $H$ with $\delta_{i}(H)>c_i(n,F)$ has an $F$-covering. 
Falgas-Ravry, Markst\"om and Zhao (Combin. Probab. Comput., 2021) asymptotically determined $c_1(n,K_{4}^{(3)-})$, where $K_{4}^{(3)-}$ is obtained by deleting an edge from the complete $3$-graph on $4$ vertices. Later, Tang, Ma and Hou (arXiv, 2022) asymptotically determined $c_1(n,C_{6}^{(3)})$, where $C_{6}^{(3)}$ is the linear triangle, i.e. $C_{6}^{(3)}=([6],\{123,345,561\})$.
In this paper, we determine $c_1(n,F_5)$ asymptotically, where $F_5$ is the generalized triangle, i.e. $F_5=([5],\{123,124,345\})$. 
We also determine the exact values of $c_1(n,F)$, where $F$ is any connected $3$-graphs with $3$ edges and $F\notin\{K_4^{(3)-}, C_{6}^{(3)}, F_5\}$.
\end{abstract}

\section{Introduction}
Given a positive integer $k\ge 2$, a {\it$k$-uniform hypergraph} (or a $k$-graph) $H=(V,E)$ consists of a vertex set $V=V(H)$ and an edge set $E=E(H)\subset \binom{V}{k}$, where $\binom{V}{k}$ denotes the set of all $k$-element subsets of $V$. We write graph for $2$-graph for short. Let $H=(V,E)$ be a simple $k$-graph. For any $S\subseteq V(G)$, let $N_{H}(S)=\{T\subseteq V(H)\backslash S: T\cup S\in E(H)\}$ and the degree $d_{H}(S)=|N_{H}(S)|$. For $1\le i\le k-1$, the {\it minimum $i$-degree} of $H$, denoted by $\delta_i(H)$, is the minimum of $d_H(S)$ over all $S\in\binom{V(H)}{i}$. We also call $\delta_{1}(G)$ the minimum degree of $G$. The $link\; graph$ of a vertex $x$ in $V$, denoted by $H_x$, is a $(k-1)$-graph $H_x=(V(G)\backslash \{x\},N_H(x))$. 

For $r\ge 2$, a {\it complete $r$-graph} on $n$ vertices, denoted by $K_n^{(r)}$, is an $r$-graph on $[n]$ with the edge set $\binom{[n]}{r}$. For a vertex set $V$, we also write $K^{(r)}[V]$ for the complete $r$-graph on $V$. We write $K_n$ for $K_n^{(2)}$ and $K[V]$ for $K^{(2)}[V]$ for short. For an $r$-graph $G$ with $U\subset V(G)$, let $G[U]=(U,E(G)\cap E(K^{(r)}[U]))$ and $G-U=G[V(G)\backslash U]$.
Also, given two $r$-graphs $G$ and $H$, let $G\cup H$ be the vertex-disjoint union of $G$ and $H$. Let $tH:=\bigcup_{i=1}^tG_i$ for some $t\ge 2$ and $r$-graphs $H, G_1,\dots, G_t$ if $G_i\cong H$ for $i\in[t]$.

Given a $k$-graph $F$, we say a $k$-graph $H$ has an $F$-covering if each vertex of $H$ is contained in some copy of $F$. For $1\le i \le k-1$, the {\it $i$-degree threshold} for $F$-covering is defined as
$$c_i(n,F):=\max\{\delta_i(G):G\;\text{is\;a\;$k$-graph\;on\;$n$\;vertices\;with\;no\;$F$-covering}\}.$$ 
We further let the {\it $i$-degree $F$-covering density} be the limit
$$c_i(F):=\lim_{n\to\infty}\frac{c_i(n,F)}{\binom{n-i}{k-i}}.$$

There are two types of extremal problems related to the covering problem. Given a $k$-graph $F$ , a $k$-graph $H$ is $F$-free if $H$ does not contain a copy of $F$ as a subgraph. For For $0\le i \le k-1$, define 
$$ex_{i}(n,F):=\max\{\delta_i(G): G \mbox{ is } F\mbox{-free and } |V(G)|=n\}\mbox{, and } \pi_i(F):=\frac{ex_i(n,F)}{\binom{n-i}{k-i}},$$
where $\delta_{0}(G):=|E(G)|$. The quantities $ex_0(n,F)$ and $\pi_0(F)$ are known as the {\it Tur\'an number} and the {\it Tur\'an density} of $F$ respectively. For Tur\'an problem on hypergraphs, one can refer to a survey given by Keevash~\cite{Ksurvey}.

Given two $k$-graphs $F$ and $H$, an $F$-tiling in $H$ is a spanning subgraph of $H$ consists of vertex-disjoint copies of $F$. For $1\le i\le k-1$ and $n\equiv 0 \mod{|V(F)|}$, define
$$t_i(n,F):=\max\{\delta_i(G):G\;\text{is\;a\;$k$-graph\;on\;$n$\;vertices\;with\;no\;$F$-tiling}\}.$$
The tiling problem in hypergraphs is also widely studied. We recommend a survey given by Zhao~\cite{Zsurvey}.
 
Trivially, for $1\le i\le k-1$,
$$ex_i(n,F)\le c_i(n,F)\le t_i(n,F),$$
which makes the covering problem an interesting but different extremal problem from Tur\'an problem and the tiling problem.

For a graph $F$, the $F$-covering problem was solved asymptotically in \cite{2-graph} by showing that $c_1(F)=\frac{\chi(F)-2}{\chi(F)-1}$, where $\chi(F)$ is the chromatic number of $F$. 

For $r$-uniform hypergraphs with $r\ge 3$, there are also some works related, most of them focus on $r=3$. Here are some exact results for $c_2(n,F)$ and $c_2(F)$ in $3$-graphs.

\begin{itemize}
\item (Falgas-Ravry, Zhao~\cite{co-FZ}) For $n>98$, $c_2(n,K_4^{(3)})=\lfloor\frac{2n-5}{3}\rfloor$.
\item (Yu, Hou, Ma, Liu~\cite{co-Our}) $c_2(n,K_4^{(3)-})=\lfloor\frac{n}{3}\rfloor$ and $c_2(n,K_5^{(3)-})=\lfloor\frac{2n-2}{3}\rfloor$, where $K_{k}^{(r)-}$ ($k\ge r\ge 2$) is an $r$-graph obtained from $K_{k}^{(r)}$ by deleting an edge.
\item (Falgas-Ravry, Zhao~\cite{co-FZ}) $c_2(C_{5}^{(3)})=\frac{1}{2}$, where $C_{5}^{(3)}=([5],\{123,234,345,451,512\})$.
\end{itemize}

For $c_1(n,F)$ and $c_1(F)$ in $3$-graphs, some know results are listed as follows.

\begin{itemize}
\item (Falgas-Ravry, Markstr\"om, Zhao~\cite{FMZ}) $c_1(K_4^{(3)-})=\frac{\sqrt{13}-1}{6}$.
\item (Tang, Ma, Hou~\cite{Our}) $c_1(C_6^{(3)})=\frac{3-2\sqrt{2}}{2}$, where $C_{6}^{(3)}=([6],\{123,345,561\})$.
\item (Falgas-Ravry, Markstr\"om, Zhao~\cite{FMZ}) $\frac{19}{27}\le c_1(K_4^{(3)})\le \frac{19}{27}+7.4\times 10^{-9}$.
\item (Falgas-Ravry, Markstr\"om, Zhao~\cite{FMZ}) $\frac{5}{9}\le c_1(C_{5}^{(3)})\le 2-\sqrt{2}$.
\item (Gu, Wang~\cite{GW}) For $n\ge 5$, $\frac{n^2}{9}\le c_1(n,F_5)\le \frac{n^2}{6}+\frac{5}{6}n-3$, where $F_5=([5],\{123,124,345\})$.
\item (Gu, Wang~\cite{GW}) For $n\ge 8$, $n-2\le c_1(n,LP_3)\le n+4$, where $LP_3=([7],\{123,345,567\})$.
\end{itemize}

In this article, we focus on $3$-graphs with $3$ edges.
Let $H$ be a hypergraph. We say $H$ is {\it connected} if for any pair of vertices $\{u,v\}\subset\binom{V(H)}{2}$, we can find a sequence of edges, say $e_1, e_2, \dots, e_t\in E(H)$, with $u\in e_1$, $v\in e_t$ and $e_i\cap e_{i+1}\neq\emptyset$ for any $i\in[t-1]$. A maximal connected subgraph for any hypergraph $H$ is called a {\it component}. Note that a connected hypergraph consists of a unique component.

By a simple enumeration, one can check that: there are only $9$ kinds of connected $3$-graphs with $3$ edges. We list all of them in Figure~\ref{3edges}.

\begin{figure}[h]
\centering
\subfigure[$K_4^{(3)-}$]{\includegraphics[width=1in]{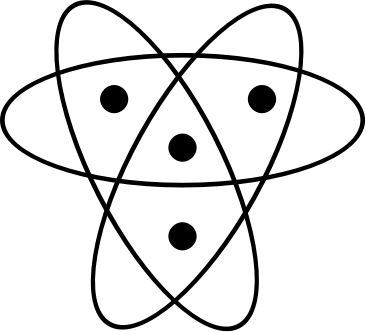}}
\hspace{0.5in}
\subfigure[$C_6^{(3)}$]{\includegraphics[width=1in]{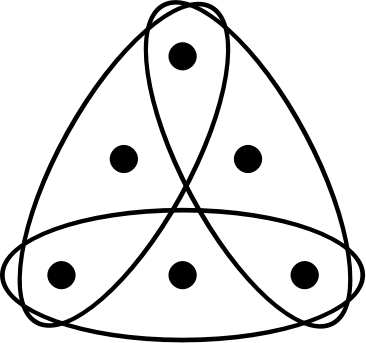}}
\hspace{0.5in}
\subfigure[$F_5$]{\includegraphics[width=1in]{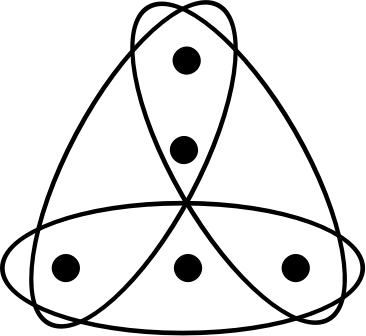}}

\subfigure[$LP_3$]{\includegraphics[width=1in]{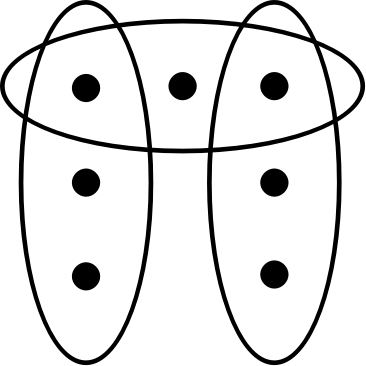}}
\hspace{0.5in}
\subfigure[$TP_3$]{\includegraphics[width=1in]{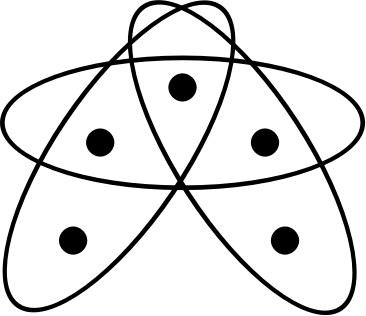}}
\hspace{0.5in}
\subfigure[$GP_3$]{\includegraphics[width=1in]{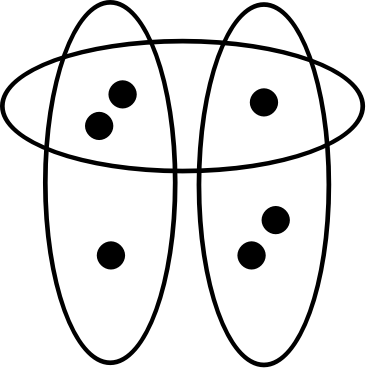}}

\subfigure[$K_{1,1,3}$]{\includegraphics[width=0.9in]{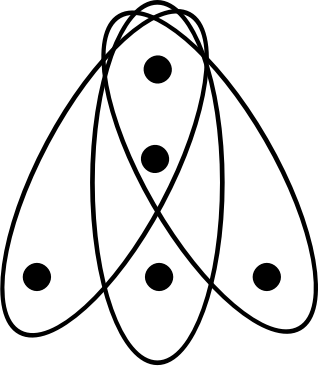}}
\hspace{0.55in}
\subfigure[$S_3$]{\includegraphics[width=1in]{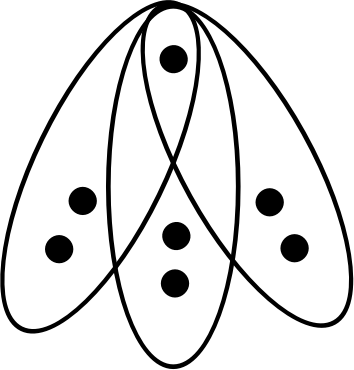}}
\hspace{0.5in}
\subfigure[$GS_3$]{\includegraphics[width=1in]{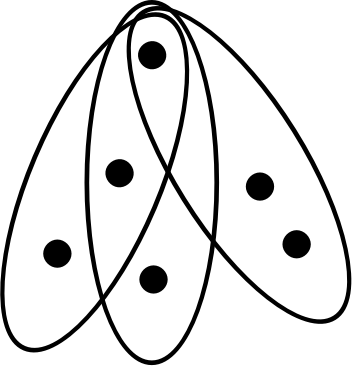}}
\caption{All possible connected $3$-graphs with $3$ edges}\label{3edges}
\end{figure}

In particular, $K_4^{(3)-}$ and $C_{6}^{(3)}$ are two examples for connected $3$-graphs with $3$ edges, whose $1$-degree covering densities are already know as mentioned above.
Another important example is called a {\it generalized triangle}, denonted by $F_5$, which is a $3$-graph on the vertex set $[5]$ with the edge set $\{123,124,345\}$. 
In 1983, Frankl and F\"uredi~\cite{FF83} gave the Tur\'an number for $F_5$.

\begin{thm}[(Frankl, F\"uredi~\cite{FF83})]\label{FF83}
For $n> 3000$, $ex_0(n,F_5)=\lfloor\frac{n}{3}\rfloor\lfloor\frac{n+1}{3}\rfloor\lfloor\frac{n+2}{3}\rfloor$. In particular, $\pi_0(F_5)=\frac{2}{9}$.
\end{thm}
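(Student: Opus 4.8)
For the lower bound I would use the balanced complete tripartite $3$-graph. Partition $[n]$ into parts $V_1,V_2,V_3$ with $\bigl|\,|V_i|-|V_j|\,\bigr|\le 1$, and let $T$ be the $3$-graph on $[n]$ whose edges are exactly the triples meeting each $V_i$ in a single vertex, so that $|E(T)|=|V_1|\,|V_2|\,|V_3|=\lfloor n/3\rfloor\lfloor(n+1)/3\rfloor\lfloor(n+2)/3\rfloor$. Then $T$ is $F_5$-free: in any copy of $F_5$ the pair $\{1,2\}$ lies in the two edges $123$ and $124$, so in $T$ vertices $1$ and $2$ must lie in two distinct parts and then $3$ and $4$ both lie in the remaining part, forcing the edge $345$ to have two vertices in a single part---impossible. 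Hence $ex_0(n,F_5)\ge\lfloor n/3\rfloor\lfloor(n+1)/3\rfloor\lfloor(n+2)/3\rfloor$, and what remains is the matching upper bound for $n>3000$.

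I would prove the upper bound by the Tur\'an-type stability method, built on the local characterisation: $H$ is $F_5$-free if and only if for every pair $\{u,v\}$ no edge of $H$ disjoint from $\{u,v\}$ contains two vertices $w,w'$ with $uvw,uvw'\in E(H)$. The first ingredient is the asymptotic density bound $\pi_0(F_5)\le 2/9$, i.e.\ $|E(H)|\le(\tfrac29+o(1))\binom n3$ for every $F_5$-free $H$ on $n$ vertices; I would invoke this as a known input, its proof running along the lines of Bollob\'as's argument for cancellative $3$-graphs (the extra, degenerate $K_4^{(3)-}$ configurations allowed in the $F_5$-free setting do not affect the asymptotics). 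From it a standard supersaturation argument gives stability: for every $\varepsilon>0$ there is $\delta>0$ so that every $F_5$-free $H$ on $n$ vertices with $|E(H)|\ge(\tfrac29-\delta)\binom n3$ admits a vertex partition $V_1\cup V_2\cup V_3$ with at most $\varepsilon n^3$ edges that are not ``tripartite'' (not one vertex per part). For the exact count, take an $F_5$-free $H$ on $[n]$ with $n>3000$ and $|E(H)|=ex_0(n,F_5)\ge|E(T)|$, so that stability applies; fix a partition minimising the number of non-tripartite edges and then repeatedly move each vertex into a part in which it has the most tripartite neighbours. One shows first that in the resulting partition every vertex keeps $\Omega(n^2)$ tripartite edges, and then that $H$ has \emph{no} non-tripartite edge at all: if $e$ had two of its vertices $a,b$ in a common part, the tripartite ``cross-neighbourhoods'' of $a$ and of $b$ would each contain at least $(1-o(1))n^2/9$ pairs, lying inside a common ground set of at most $(1+o(1))n^2/9$ pairs, and hence would intersect; a pair $\{u,w\}$ in the intersection disjoint from $e$ then realises the edges $123,124,345$ on five distinct, correctly placed vertices---a copy of $F_5$. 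Therefore $H\subseteq T$ for this partition, so $|E(H)|\le|V_1|\,|V_2|\,|V_3|\le\lfloor n/3\rfloor\lfloor(n+1)/3\rfloor\lfloor(n+2)/3\rfloor$ with equality only for a balanced partition, and $\pi_0(F_5)=2/9$ follows on letting $n\to\infty$.

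The crux, and the main obstacle, is this exact step: the ``every deviating edge spans an $F_5$'' argument has to be carried out uniformly over all vertices and with explicit quantitative control of the error terms---in particular one must prove the $\Omega(n^2)$ lower bound on each vertex's tripartite degree, which requires exploiting the $F_5$-free condition at atypical vertices---because the statement is claimed already for $n>3000$, whereas a plain stability/supersaturation argument only yields an unspecified (and typically astronomical) threshold. Reaching the concrete bound $n>3000$ forces either a delicate finitary version of this cleanup or, as in the original work of Frankl and F\"uredi, a direct argument that never passes through stability. A secondary point is that the asymptotic density bound $\pi_0(F_5)\le 2/9$ used in the first step is itself a genuine theorem, not a routine estimate.
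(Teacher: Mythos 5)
The first thing to note is that the paper contains no proof of this statement: Theorem~\ref{FF83} is quoted from Frankl and F\"uredi \cite{FF83} (with the threshold later improved to $n>33$ by Keevash and Mubayi \cite{KM}), so there is nothing internal to compare your argument against. Judged on its own, your lower-bound half is correct and is exactly the extremal construction the paper describes: the balanced complete tripartite $3$-graph $K_{\lfloor n/3\rfloor,\lfloor(n+1)/3\rfloor,\lfloor(n+2)/3\rfloor}$ is $F_5$-free by the argument you give (the pair in two edges forces two vertices of the third edge into one part), and it has $\lfloor n/3\rfloor\lfloor(n+1)/3\rfloor\lfloor(n+2)/3\rfloor$ edges.

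The upper bound, however, is not a proof but a programme with two genuine gaps, both of which you partly acknowledge. First, you invoke $\pi_0(F_5)\le 2/9$ as a ``known input,'' justified by a one-line appeal to Bollob\'as's argument for cancellative triple systems; but $F_5$-freeness is strictly weaker than cancellativity, and extending that argument is real work --- it is a substantial part of what Frankl and F\"uredi actually proved, so treating it as routine leaves the heart of the asymptotic statement unestablished. Second, the statement is an exact result with the explicit threshold $n>3000$, and the supersaturation-plus-stability cleanup you sketch (the $\Omega(n^2)$ tripartite-degree bound, the ``every deviating edge spans an $F_5$'' step) is asserted rather than carried out with quantitative control; as you concede, such an argument as written yields only an unspecified and typically enormous $n_0$. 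The original proof is a direct counting/shifting argument that never passes through stability (a stability route can be made to work with explicit constants, as Keevash and Mubayi later showed, but that requires precisely the finitary cleanup you leave open). So the construction and the general strategy are right, but neither the exact count nor the stated threshold is actually established by the proposal.
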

Note that the condition for $n$ in Theorem~\ref{FF83} was later improved to $n>33$ by Keevash and Mubayi~\cite{KM}. There are also some other extremal results related to $F_5$, we refer to \cite{BM,BBHL,BBHLM} for example.

To give the extremal construction for Theorem~\ref{FF83}, we need some definitions.
For two families of sets $\mathcal{A}$ and $\mathcal{B}$, define $\mathcal{A}\vee\mathcal{B}=\{A\cup B: A\in\mathcal{A}\mbox{ and }B\in\mathcal{B}\}$.
For $r\ge 2$, a {\it complete $r$-partite $r$-graph with partition set $V_1,V_2,\dots, V_r$}, denoted by $K[V_1,V_2,\dots,V_r]$, is an $r$-graph on $\bigcup_{i=1}^rV_i$ with the edge set
$$E(K[V_1,V_2,\dots,V_r])=\binom{V_1}{1}\vee\binom{V_2}{1}\vee\dots\vee\binom{V_r}{1}.$$ 
For an $r$-graph $H$ with $\bigcup_{i=1}^rV_i\subset V(H)$, let $G[V_1,\dots, V_r]=(\bigcup_{i=1}^rV_i,E(H)\cap E(K[V_1,\dots,V_r]))$.
If $|V_i|=n_i$ for $i\in [r]$, we write $K_{n_1,\dots, n_r}$ for $K[V_1,\dots,V_r]$. In particular, $K_{1,1}=K_2$ and $K_{1,1,3}=([5],\{123,124,125\})$.

One can check that $K_{\lfloor\frac{n}{3}\rfloor,\lfloor\frac{n+1}{3}\rfloor,\lfloor\frac{n+2}{3}\rfloor}$ on $n$ vertices contains no copy of $F_5$ as its subgraph, which is an extremal construction for Theorem~\ref{FF83}. Hence we can easily deduce from Theorem~\ref{FF83} that
$\pi_1(F_5)=\frac{2}{9}$.
This leads to $c_1(F_5)\ge\frac{2}{9}$. In fact, the result of Gu and Wang~\cite{GW} about $F_5$ implies that $\frac{2}{9}\le c_1(F_5)\le\frac{1}{3}$.
In this paper, we verify the exact value that $c_1(F_5)=\frac{1}{4}$.
\begin{thm}\label{F_5}
For $n\ge 5$, $\frac{1}{8}n^2-\sqrt{2}n<c_1(n,F_5)<\frac{1}{8}n^2+\frac{5}{4}n$. In particular, $c_1(F_5)=\frac{1}{4}$.
\end{thm}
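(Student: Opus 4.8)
The plan is to establish the two bounds in Theorem~\ref{F_5} separately, since the asymptotic value $c_1(F_5)=\tfrac14$ follows immediately from $\tfrac18 n^2 - \sqrt 2 n < c_1(n,F_5) < \tfrac18 n^2 + \tfrac54 n$ upon dividing by $\binom{n-1}{2}\sim \tfrac12 n^2$.

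\textbf{Lower bound (construction).} I would exhibit an $n$-vertex $3$-graph $H$ with no $F_5$-covering and $\delta_1(H) > \tfrac18 n^2 - \sqrt 2 n$. The natural candidate is to isolate one ``bad'' vertex (or a small set of bad vertices) so that it lies in no copy of $F_5$, while every other vertex does but the minimum degree stays high; however, since $F_5$-covering is about \emph{every} vertex being covered, it suffices to ruin one vertex. A cleaner route, mimicking the Frankl--F\"uredi extremal configuration: take a vertex $v$, and on $V(H)\setminus\{v\}$ place a blow-up of a triangle-free-type configuration balanced into two parts so that the link of $v$ together with the rest forces any edge through $v$ to extend only to configurations avoiding the ``$345$''-type third edge of $F_5$. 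Concretely I expect the extremal $H$ to be built from a complete bipartite-like link at $v$ of size $\approx \tfrac14 (n-1)^2$ (two nearly equal parts $A,B$ with $|A||B|$ maximized, giving $\tfrac18 n^2$) plus all edges inside $A\cup B$ of a carefully chosen type, arranged so that no copy of $F_5$ uses $v$; one then checks $\delta_1(H)$ is governed by this link size, and verifies via the structure of $F_5$ (two edges sharing a pair, a third edge sharing one vertex with one of them) that $v$ is genuinely uncovered. The $-\sqrt2 n$ slack comes from the rounding in splitting $n-1$ into two parts and from degrees of vertices other than $v$.

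\textbf{Upper bound (sufficiency).} I would show that if $\delta_1(H) \ge \tfrac18 n^2 + \tfrac54 n$ then every vertex $v$ lies in a copy of $F_5$. Fix $v$; its link graph $H_v$ is a graph on $n-1$ vertices with $e(H_v) = d_H(v) \ge \tfrac18 n^2 + \tfrac54 n > \tfrac14\binom{n-1}{2} + \text{(lower-order)}$, so $H_v$ has more than roughly a quarter of all possible edges. To find $F_5$ through $v$ I need either: (i) a pair $xy \in H_v$ (so $vxy$ is an edge of $H$) together with $z$ such that $vxz$ or $vyz$ is also an edge — i.e.\ a path $x$--$y$ plus an incident edge in $H_v$, giving the two edges $vxy, vyz$ sharing the pair... — wait, that gives $F_5$ only if the third edge avoids $v$. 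The correct reading: $F_5 = \{123,124,345\}$; to cover vertex $v$ I may place $v$ in any of the five roles. Placing $v$ as vertex $5$: I need an edge $e_1 \ni v$, an edge $e_2 \ni v$ with $|e_1\cap e_2| = 2$ — impossible since both contain $v$; rather place $v$ as vertex $3$: need $xyv \in E(H)$, $xyw \in E(H)$ (so $xy$ is an edge of $H_v$ and also $H_w$, i.e.\ $xyw \in E(H)$), and $vab \in E(H)$ with $\{a,b\}\cap\{x,y,w\}=\emptyset$. So: find an edge $xy$ of $H_v$, a vertex $w$ with $xyw\in E(H)$, and an edge $ab$ of $H_v$ disjoint from $\{x,y,w\}$. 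Since $e(H_v)$ is quadratic, $H_v$ has a matching of size $\Omega(n)$ and $H_v$ has many edges; the only obstruction is if \emph{every} edge $xy$ of $H_v$ fails to extend to some $xyw \in E(H)$ with an independent partner — but an edge $xy\in H_v$ with $d_H(xy)\ge 1$ beyond $v$ (forced once $\delta_1(H)$ is large, via a supersaturation/averaging argument over the $\approx \tfrac18 n^2$ edges at $v$ and the edges of $H$ not through $v$) plus a disjoint edge of $H_v$ (available because $H_v$ has $>\tfrac14\binom{n-1}{2}$ edges hence no vertex cover of size $O(1)$, indeed a matching number $\ge c n$) finishes it. I would make this rigorous by a counting argument: bound the number of ``bad'' edges of $H_v$ (those extending to no valid $F_5$) and show it is $< e(H_v)$ when $d_H(v) \ge \tfrac18 n^2 + \tfrac54 n$.

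\textbf{Main obstacle.} The delicate part is the upper bound: turning ``$H_v$ has $\gtrsim \tfrac14\binom{n-1}{2}$ edges'' into ``$v$ is in a copy of $F_5$'' requires controlling the interaction between the link $H_v$ and the edges of $H$ avoiding $v$, and pinning down exactly which density forces an edge $xy$ of $H_v$ that both extends (some $w$ with $xyw\in E(H)$) and has a disjoint partner edge in $H_v$. Getting the constant to be precisely $\tfrac18 n^2$ (matching the construction up to the $O(n)$ terms) means the averaging must be essentially tight — presumably one argues that if $v$ is uncovered then $H_v$ is close to a balanced complete bipartite graph and the non-link edges are constrained, then bootstraps a contradiction with the exact minimum degree hypothesis. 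I anticipate the construction side is comparatively routine once the right configuration is guessed, so I would spend most of the effort on the extremal stability analysis for the upper bound.
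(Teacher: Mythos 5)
Your upper-bound plan rests on a mis-reading of $F_5$, and this is fatal. Writing $F_5=\{123,124,345\}$ and placing the uncovered vertex $v$ in the role of vertex $3$, the three edges are $xyv$, $xyw$ and $vwc$: the third edge must contain $w$, the role-$4$ vertex. The configuration you propose to hunt for instead, namely $xyv$, $xyw$, $vab$ with $\{a,b\}\cap\{x,y,w\}=\emptyset$, spans six vertices and is a copy of $GS_3$, not of $F_5$, so the counting argument you outline would at best prove a $GS_3$-covering statement. Your dismissal of the role-$5$ placement is also based on a misreading: when $v$ plays vertex $5$, only the edge $345$ contains $v$, so nothing there is ``impossible''. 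Moreover, even with the correct local configurations, your sketch has no mechanism that produces the constant $\tfrac14\binom{n-1}{2}\approx\tfrac18 n^2$: the paper obtains it from a four-fold disjointness statement, namely that for an uncovered vertex $u$ and any $xy\in E(H_u)$ the pair-families $E(H_u-\{x,y\})$, $E(H_x-\{u\})$, $E(H_y-\{u\})$ and $E(\mathcal{E}(H_u-\{x,y\}))$ (pairs with a common neighbour in $H_u-\{x,y\}$) are pairwise disjoint subsets of $\binom{V(H_u)}{2}$, combined with the lemma $|E(\mathcal{E}(G))|\ge|E(G)|-\tfrac{n}{2}$; each family has size at least $\delta_1(H)-O(n)$, whence $4\delta_1(H)-O(n)\le\binom{n-1}{2}$, a contradiction. ``Bound the bad edges and run a stability analysis'' is not a substitute for such an argument, and nothing in your outline indicates how it would yield the threshold $\tfrac18 n^2+\tfrac54 n$.

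The lower bound is likewise not established: no construction is actually given, and the shape you guess is quantitatively wrong. Two nearly equal parts $A,B$ partitioning $V\setminus\{v\}$ give $|A||B|\approx\tfrac14(n-1)^2$, not $\tfrac18 n^2$, so your parenthetical ``giving $\tfrac18 n^2$'' is inconsistent; worse, if the link of $v$ were all of $K[A,B]$ with $A\cup B=V\setminus\{v\}$, then every edge of $H-v$ would close a copy of $F_5$ through $v$ (an edge with two vertices in one part uses a common link-neighbour in the other part; an edge containing an $A$--$B$ pair $xy$ uses $xyv$, that edge, and $vwc$ for a link-neighbour $c$ of its third vertex $w$), forcing $H-v$ to be empty and all degrees outside $v$ to be $O(n)$. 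The actual extremal example has a genuinely three-part structure: the link of $u$ is $K[X,Y]$ with $|X|=|Y|=\lfloor\tfrac{\sqrt2}{4}n\rfloor-1$, a third part $Z$ is isolated in the link, and the remaining edges are all triples with one vertex in each of $X,Y,Z$, all of $\binom{Z}{3}$, and the triples $\binom{X}{1}\vee E_X$, $\binom{Y}{1}\vee E_Y$ for a near-equal partition $\binom{Z}{2}=E_X\sqcup E_Y$. The constant $\tfrac{\sqrt2}{4}$ arises from balancing $d(u)=|X||Y|$ against the degrees of vertices in $X\cup Y$ (about $|X||Z|+\tfrac14|Z|^2$), not from rounding a bipartition of $n-1$, and the term $-\sqrt2 n$ reflects this optimization. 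Without exhibiting such a construction and verifying both that no $F_5$ covers $u$ and that every vertex has degree exceeding $\tfrac18 n^2-\sqrt2 n$, the lower bound remains unproved.
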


For $k\ge 1$, a {\it linear star} with $k$ edges, denoted by $S_k$, is a $3$-graph on $[2k+1]$ with edge set $\{123,145,167,\dots, 1(2k)(2k+1)\}$. In particular, $S_3=([7],\{123,145,167\})$.

A {\it path of length $k-1$} for some $k\ge 2$, denoted by $P_k$, is a graph on $[k]$ whose edge set is $\{12,23,34,\dots, (k-1)k\}$. In $3$-graph, however, we have several different definitions for a path. 
For $k\ge 1$, a {\it linear $k$-path}, denoted by $LP_k$, is a $3$-graph on $[2k+1]$ with the edge set $\{123, 345, 567, \dots, (2k-1)2k(2k+1)\}$. In particular, $LP_3=([7],\{123,345,567\})$.
For $k\ge 1$, a {\it tight $k$-path}, denoted by $TP_k$, is a $3$-graph on $[k+2]$ with the edge set $\{123, 234, 345, \dots, k(k+1)(k+2)\}$. In particular, $TP_3=([5],\{123,234,345\})$.

There are only two kinds of connected $3$-graphs with $3$ edges other than $K_{4}^{(3)-}$, $C_{6}^{(3)}$, $F_5$, $LP_3$, $TP_3$, $K_{1,1,3}$ and $S_3$. 
We use $GP_3$ and $GS_3$ to denote them:
$$GP_3=([6], \{123, 234, 456\})\mbox{ and } GS_3=([6], \{123,124,156\}).$$

We determine the exact values of $c_1(n,F)$, where $F\in\{LP_3$, $TP_3$, $GP_3$, $K_{1,1,3}$, $S_3$, $GS_3\}$ in this paper.

\begin{thm}\label{other}
(1) For $n\ge 13$, $c_1(n,LP_3)=n-2$.\\
(2) For $n\ge 6$,  
\begin{equation*}
     c_1(n, TP_3)=\begin{cases}
     		n-1  & n\equiv 1  \mod 3;\\
     		n-2  & n\equiv 0,2  \mod 3.
     	\end{cases}
     \end{equation*}
(3) For $n\ge14$, $c_1(n,GP_3)=n-2$.\\
(4) For $n\ge 9$, $c_1(n,K_{1,1,3})=n-1$.\\
(5) For $n\ge 11$, $c_1(n,S_3)=n-1$.\\
(6) For $n\ge 13$, $c_1(n,GS_3)=\lfloor\frac{n-1}{2}\rfloor$.
\end{thm}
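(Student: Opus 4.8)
The plan is to treat each of the six hypergraphs separately, but all following the same two-part template: an upper bound on $c_1(n,F)$ via a covering argument, and a lower bound via an explicit extremal construction. For each $F$ in the list, I would first identify the ``bottleneck'' vertex configuration — a vertex $x$ that, once its link graph $H_x$ is rich enough, is guaranteed to lie in a copy of $F$. Since all these $F$ have only $3$ edges, the relevant fact is elementary: a vertex $x$ is covered by $F$ precisely when one can embed $F$ with $x$ playing a prescribed role, which reduces (after deleting $x$) to finding a small fixed configuration of $2$ or $3$ further edges in $H$ meeting $H_x$ appropriately. So the upper bound argument will say: if $\delta_1(H)$ exceeds the claimed threshold, then for every vertex $x$ the pair $(H_x, H)$ is dense enough to locate that configuration, hence $x$ is covered. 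The lower bound will exhibit an $H$ with minimum degree exactly one less than the claimed value and a vertex (or a whole class of vertices) provably in no copy of $F$, typically by a parity or partition obstruction.

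Concretely, for $LP_3=([7],\{123,345,567\})$ and $GP_3=([6],\{123,234,456\})$: a vertex $x$ fails to be in a copy only if, roughly, it cannot be extended into a linear/loose path of $3$ edges; the construction achieving $n-2$ should be something like a single edge through $x$ together with a near-complete structure on the rest that is too ``clustered'' to provide the two further disjoint edges — I expect the extremal graph to be (a slight modification of) the graph where $x$ has degree $n-2$ but its neighbourhood fails to extend. For $TP_3=([5],\{123,234,345\})$ the tight path of $3$ edges through $x$ requires a specific overlap pattern, and the $n\equiv 1\pmod 3$ versus $n\equiv 0,2\pmod 3$ split in the statement strongly suggests the extremal construction is built from a balanced/near-balanced complete $3$-partite $3$-graph (or its complement within some host), where divisibility governs whether a tight $3$-path survives through every vertex. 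For $K_{1,1,3}=([5],\{123,124,125\})$ and $S_3=([7],\{123,145,167\})$, covering $x$ amounts to: either $x$ lies in three edges sharing a common pair (for $K_{1,1,3}$), or $x$ lies in three edges pairwise meeting only in $x$ (for $S_3$); the threshold $n-1$ and the construction should come from giving $x$ degree $n-2$ in a way that either never produces a repeated pair (for $S_3$) or never produces three edges on a common pair (for $K_{1,1,3}$) — e.g. a ``matching-like'' or ``sunflower-free'' link graph $H_x$ of maximum possible size avoiding the needed pattern. For $GS_3=([6],\{123,124,156\})$, the $\lfloor (n-1)/2\rfloor$ threshold signals that the obstruction is genuinely two-dimensional: $x$ must host an edge pair sharing a pair $\{x,y\}$ together with a third edge through $x$ disjoint (apart from $x$) from the rest, and the extremal construction is presumably a blow-up-type graph on $n-1$ vertices split into two halves so that the link of $x$ is bipartite-like and cannot realize the required sunflower-plus-pendant pattern; the factor $1/2$ comes from that bipartition.

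The key lemmas I would isolate and prove once, in full generality, are: (i) for each $F$, a clean ``local covering criterion'' of the form \emph{$x$ is in a copy of $F$ iff $H_x$ and $H-x$ jointly contain [explicit small configuration]}; and (ii) for each $F$, a ``density forces configuration'' statement quantifying how large $\delta_1(H)$ must be to guarantee (i). Step (ii) is where the real work lies, and it differs per graph: for the $n-1$ and $n-2$ cases it is essentially a pigeonhole/Bondy–type counting argument on the link graph; for $TP_3$ one must additionally handle the divisibility obstruction, which I expect to require analyzing the structure of extremal link graphs fairly carefully.

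The main obstacle, I expect, is the $TP_3$ case and, to a lesser extent, $GS_3$: in both, the extremal number is not simply ``give $x$ one fewer than its maximum possible degree,'' so establishing the \emph{lower} bound requires building a global construction and verifying that \emph{some} vertex lies in no tight (resp.\ generalized-star) $3$-path — which means checking that no embedding of $F$ uses that vertex in \emph{any} of its roles. Verifying such a non-containment typically forces a short case analysis over the automorphism-orbits of $F$, and getting the divisibility thresholds for $TP_3$ exactly right (the $n\equiv 1$ versus $n\equiv 0,2$ dichotomy) will need the construction to be chosen with the correct parity of part sizes. The remaining five cases I expect to be routine once the local covering criteria are stated; the bulk of the paper's effort should be concentrated on $TP_3$.
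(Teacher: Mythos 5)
Your proposal is a plan rather than a proof, and at the two places where the actual content has to live it either stays silent or guesses wrong. On the lower bounds: the extremal examples are not supplied, and several of your guesses do not match constructions that achieve the stated values. For $LP_3$ and $GP_3$ the right example is simply the full star $\{\{0\}\}\vee\binom{[n-1]}{2}$ (a trivial intersecting family): the uncovered vertex is $0$, which has huge degree, and the minimum degree $n-2$ is attained at the \emph{other} vertices — not ``$x$ has degree $n-2$ but its neighbourhood fails to extend.'' For $TP_3$ with $n\equiv 1\pmod 3$ the construction is not a balanced complete $3$-partite $3$-graph; one partitions $V\setminus\{u\}$ into triples $A_1,\dots,A_k$, takes all edges $\{u\}\vee\binom{A_i}{2}$ plus all triples transversal to three distinct $A_i$'s, so that $d(u)=3k=n-1$ and $u$ is in no tight path. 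For $GS_3$ the factor $1/2$ does not come from a bipartition: the link of the uncovered vertex is a perfect matching on the remaining $n-1$ vertices, again completed by all triples transversal to three matching pairs. For $K_{1,1,3}$ and $S_3$ the uncovered vertex has degree $n-1$ (a Hamilton cycle link, resp.\ a $K_{2,n-4}$ link), not $n-2$. Verifying minimum degrees and non-coverage for these graphs is short but it is exactly the content you would need to write down.

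On the upper bounds the gap is structural, not just one of detail: your scheme ``local covering criterion $+$ the link $H_x$ is dense enough to force the configuration'' cannot give the stated thresholds, because for most of these $F$ the forbidden link configuration has extremal number far above the threshold. For $S_3$ an uncovered vertex merely has a $3K_2$-free link, and such graphs can have about $2n-5\ge n$ edges; for $GS_3$ the link is only $K_{1,2}\cup K_2$-free, and a star already has $n-2\ge\lfloor (n-1)/2\rfloor+1$ edges; for $LP_3$ the link of the uncovered vertex can even be complete. So no pigeonhole count inside $H_u$ alone yields a contradiction: one must exploit edges of $H$ avoiding $u$ and the minimum degree of \emph{other} vertices. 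In the paper this is where the work is — for $LP_3$ a two-stage argument finds $K_{1,2}\cup K_2$ and then $K_{1,3}\cup K_{1,2}$ in $H_u$ and shows the links of the centres are nearly complete towards $u$; for $GP_3$ one takes $u$ of \emph{minimum} degree among uncovered vertices, proves $H_u$ is $2$-regular (a disjoint union of cycles), shows every edge meets a cycle in $0$ or $3$ vertices, and bounds degrees on short cycles; for $S_3$ one applies the Tutte--Berge theorem to the $3K_2$-free link and then analyses $|B|\in\{1,2\}$ using auxiliary claims about $H_x-\{u,y\}$. None of this is anticipated by your outline, and your prediction of where the difficulty lies is inverted: $TP_3$ is in fact the easy case (a $P_4$-free link has only star and triangle components, and a degree count finishes it), while $LP_3$, $GP_3$, $S_3$ and $GS_3$ are where the genuinely new arguments are needed.
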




The rest of the paper is arranged as follows. In Section 2, we proof Theorem~\ref{F_5}. In Section 3, we show the other cases in turn and finish the proof of Themrem~\ref{other}. We give some concluding remarks in Section 4.


\section{$F_5$: proof of Theorem~\ref{F_5}}
\subsection {Lower bound}
\textbf{Construction 1:} Let $H_1=(V_1,E_1)$ be a $3$-graph with $V_1=\{u\}\sqcup X\sqcup Y\sqcup Z$, and
\begin{eqnarray*}
E_1&=&\left(\{\{u\}\}\vee\binom{X}{1}\vee\binom{Y}{1}\right)\cup\left(\binom{Z}{1}\vee\binom{X}{1}\vee\binom{Y}{1}\right)\\
&&\cup\left(\binom{X}{1}\vee E_X \right)\cup\left(\binom{Y}{1}\vee E_Y \right)\cup\binom{Z}{3}\mbox{,}
\end{eqnarray*}
where $|X|=|Y|=\lfloor\frac{\sqrt{2}}{4}n\rfloor-1$, $E_X\sqcup E_Y=\binom{Z}{2}$ and $||E_X|-|E_Y||\le 1$.

\begin{figure}[h]
\centering
\includegraphics[width=3in]{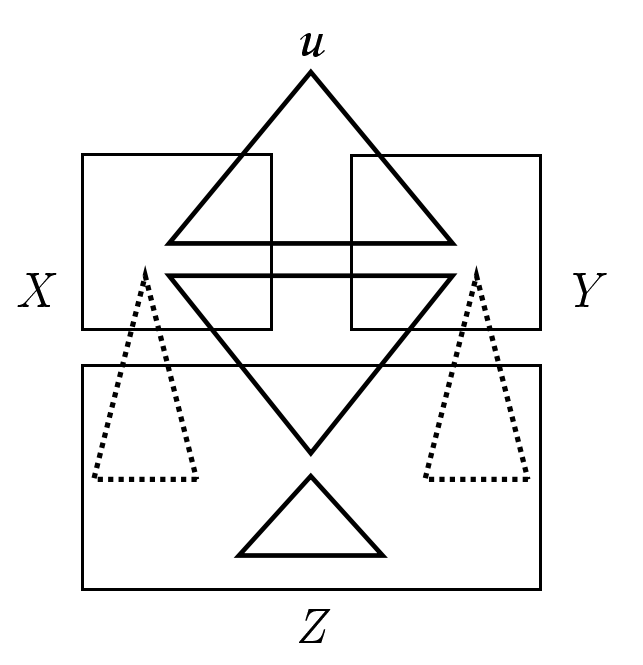}
\caption{Construction 1}\label{c1}
\end{figure}

\begin{obs}
$\delta_1(H_1)>\frac{1}{8}n^2-\sqrt{2}n$ and $H_1$ has no $F_5$ covering $u$.
\end{obs}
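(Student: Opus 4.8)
The plan is to verify the two assertions in turn.

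\textbf{$H_1$ has no $F_5$ covering $u$.} I would first record three structural facts about $H_1$: (i) the link graph $(H_1)_u$ is exactly the complete bipartite graph $K[X,Y]$, so every edge of $H_1$ through $u$ meets each of $X$ and $Y$ in precisely one vertex; (ii) no edge of $H_1$ contains two vertices of $X$, and likewise none contains two vertices of $Y$ (one checks this against the five families defining $E_1$); and (iii) $E_X$ and $E_Y$ are disjoint subsets of $\binom{Z}{2}$. Now write $F_5$ on $\{1,2,3,4,5\}$ with edges $123,124,345$ and suppose some copy of $F_5$ in $H_1$ uses $u$. Using the automorphisms $1\leftrightarrow2$ and $3\leftrightarrow4$ of $F_5$, there are three cases for the role of $u$: a degree-$2$ vertex of the pair of edges $123,124$ that share $\{1,2\}$ (so $u$ plays $1$ or $2$), a degree-$2$ vertex shared between that pair and the last edge (so $u$ plays $3$ or $4$), or the pendant vertex $5$. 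If $u$ plays the role of $1$, then $\{2,3\}$ and $\{2,4\}$ are edges of $(H_1)_u=K[X,Y]$ by (i), forcing $3$ and $4$ into the same part, so the edge $345$ has two vertices in one part, contradicting (ii); the case $u=3$ is analogous, with the edge $124$ supplying the contradiction. If $u$ plays the role of $5$, then (i) makes $\{3,4\}$ an $X$--$Y$ pair, say $3\in X$ and $4\in Y$; examining the two possible types of an edge of $H_1$ through $3$ but not through $u$, one type forces $124$ to have two vertices of $Y$ (contradicting (ii)), and the other forces the $Z$-pair of $124$ to lie in $E_X\cap E_Y$ (contradicting (iii)). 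Hence no copy of $F_5$ uses $u$.

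\textbf{$\delta_1(H_1)>\frac18 n^2-\sqrt2\,n$.} Set $m=|X|=|Y|=\lfloor\frac{\sqrt2}{4}n\rfloor-1$ and $z=|Z|=n-1-2m$. A direct count of the five edge families, using $E_X\sqcup E_Y=\binom{Z}{2}$ and, for a vertex $w\in Z$, the identity $d_{E_X}(w)+d_{E_Y}(w)=z-1$, gives $d_{H_1}(u)=m^2$, $d_{H_1}(x)=m+zm+|E_X|$ for $x\in X$ (and symmetrically $d_{H_1}(y)=m+zm+|E_Y|$ for $y\in Y$), and $d_{H_1}(w)=m^2+m(z-1)+\binom{z-1}{2}$ for $w\in Z$. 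The choice $m\approx\frac{\sqrt2}{4}n$ is precisely the one equalising the main terms $m^2$ and $zm+\frac14\binom{z}{2}$, so $d_{H_1}(u)=m^2$ is (up to lower-order terms) the smallest of the four; and since $m\ge\frac{\sqrt2}{4}n-2$ we get $m^2\ge(\frac{\sqrt2}{4}n-2)^2=\frac18 n^2-\sqrt2\,n+4>\frac18 n^2-\sqrt2\,n$. For $x\in X$ (resp.\ $y\in Y$) one substitutes $z=n-1-2m$ and $|E_X|\ge\frac12\binom{z}{2}-1$ and checks that the resulting quadratic in $n$ and $m$ exceeds $\frac18 n^2-\sqrt2\,n$; for $w\in Z$ one simply uses $d_{H_1}(w)\ge m^2$. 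Combining, $\delta_1(H_1)>\frac18 n^2-\sqrt2\,n$.

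\textbf{Main obstacle.} The degree computation is routine once the five edge families are counted carefully; the part that needs attention is the case analysis for the $F_5$-covering, especially the case $u=5$, which is where the disjointness of $E_X$ and $E_Y$ — and hence the seemingly odd decision to split $\binom{Z}{2}$ into two nearly equal halves rather than putting it all on one side — is actually used. One should also note that the stated inequality is only meaningful once $n$ exceeds a small constant, and for the remaining small $n$ it holds for trivial reasons.
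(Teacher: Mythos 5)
Your proposal is correct and follows essentially the same route as the paper: the minimum-degree verification proceeds by exactly the same three-case computation ($u$, $X\cup Y$, $Z$) with the same bounds on $|E_X|,|E_Y|$ and the same handling of small $n$, while the covering claim — which the paper dismisses as "easy to check" — you simply spell out via a (correct) case analysis on the role $u$ plays in a putative copy of $F_5$.
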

\begin{proof}
It is easy to check that $H_1$ has no $F_5$ covering $u$.
Let $a=|X|=|Y|=\lfloor\frac{\sqrt{2}}{4}n\rfloor-1$ and $b=|Z|=n-1-2a$. Since $E_X\sqcup E_Y=\binom{Z}{2}$ and $||E_X|-|E_Y||\le 1$, $|E_X|,|E_Y|\ge\lfloor\frac{1}{2}\binom{b}{2}\rfloor\ge\frac{b(b-1)}{4}-\frac{1}{2}$. Note that the case of $n=5$ is apparently true. For $n\ge 6$, we have $a\ge 1$ and $b\ge 3$. Choose $v\in V(H_1)$.

If $v=u$, then
$$d_{H_1}(v)=a^2>(\frac{\sqrt{2}}{4}n-2)^2=\frac{1}{8}n^2-\sqrt{2}n+4>\frac{1}{8}n^2-\sqrt{2}n\mbox{.}$$

If $v\in X\cup Y$, then
\begin{eqnarray*}
d_{H_1}(v)&\ge& a+ab+\frac{b(b-1)}{4}-\frac{1}{2}=\frac{n^2-3n}{4}-(a+\frac{3}{2})a\\
&\ge& \frac{n^2-3n}{4}-(\frac{\sqrt{2}}{4}n+\frac{1}{2})(\frac{\sqrt{2}}{4}n-1)\\
&=&\frac{1}{8}n^2-\frac{6+\sqrt{2}}{8}n+\frac{1}{2}>\frac{1}{8}n^2-\sqrt{2}n\\
\end{eqnarray*}

If $v\in Z$, then
$$d_{H_1}(v)>a^2+\binom{b-1}{2}>a^2=d_{H}(u)>\frac{1}{8}n^2-\sqrt{2}n\mbox{.}$$
Therefore, $\delta_1(H_1)>\frac{1}{8}n^2-\sqrt{2}n$.
\end{proof}

\subsection{Upper bound}
For any graph $G$, let $\mathcal{E}(G)=\{uv\in\binom{V(G)}{2}: N_G(v)\cap N_{G}(u)\neq\emptyset\}$ be the graph on $V(G)$ whose edges are all pairs of vertices sharing as least one common neighbor. We have the following result about the number of edges in $\mathcal{E}(G)$.

\begin{lem}\label{2}
For any graph $G$ on $n$ vertices, $|E(\mathcal{E}(G))|\ge |E(G)|-\frac{n}{2}$.
\end{lem}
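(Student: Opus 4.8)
The plan is to prove Lemma~\ref{2} by a clean induction on the number of vertices $n$, removing one carefully chosen vertex at a time. The statement is that $|E(\mathcal{E}(G))|\ge |E(G)|-\frac{n}{2}$, so each vertex deletion is allowed to ``lose'' at most $\frac12$ in the inequality; equivalently, when we delete a vertex $v$, the number of edges of $G$ incident to $v$ minus the number of edges of $\mathcal{E}(G)$ that disappear should be at most $\frac12$, i.e. at most $0$ on average (since everything is an integer, ``at most $\frac12$'' means ``at most $0$''). So I would actually aim to show: there is always a vertex $v$ with $d_G(v) \le |E(\mathcal{E}(G))| - |E(\mathcal{E}(G-v))|$, after which induction closes immediately, with the base case $n\le 2$ (or $n=1$) trivial since then $|E(G)|\le 0 \le |E(G)|-\frac n2$ fails to even be needed — more carefully the base case is $n=1$, $|E(G)|=0\ge 0-\frac12$.

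The key structural observation is the following: if $v$ is a vertex of minimum degree $d=d_G(v)$, I want to find $d$ distinct edges in $\mathcal{E}(G)$ that are destroyed by removing $v$. The natural candidates are pairs $\{v,w\}$ where $w$ is a vertex at distance $2$ from $v$ (a common neighbor exists), but that may not give enough pairs if $v$'s neighborhood is ``spread out.'' A better idea: among the neighbors $u_1,\dots,u_d$ of $v$, each $u_i$ shares the common neighbor $v$ with each $u_j$, so \emph{all} $\binom d2$ pairs $\{u_i,u_j\}$ lie in $\mathcal{E}(G)$; the trouble is they might survive in $\mathcal{E}(G-v)$ if $u_i,u_j$ have another common neighbor. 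So I would instead consider the pairs $\{v,u_i\}$ together with a maximal matching or tree structure. Actually the cleanest route: take a vertex $v$ and look at the edges of $\mathcal{E}(G)$ \emph{incident to $v$}; call this set $F_v$. We have $d_{\mathcal{E}(G)}(v) = |F_v|$, and removing $v$ from $G$ can only delete edges of $\mathcal{E}(G)$ that are either incident to $v$ or become non-edges because $v$ was their unique common neighbor. So I would split into two cases. If some vertex $v$ has $d_G(v)\le d_{\mathcal{E}(G)}(v)$, delete it: the edges of $\mathcal{E}(G)$ incident to $v$ all vanish, that is at least $d_G(v)$ of them, so $|E(\mathcal{E}(G))|-|E(\mathcal{E}(G-v))| \ge d_{\mathcal{E}(G-v)\text{ count}}\ge d_G(v)$ — wait, I must be careful that deleting $v$ from $G$ does not \emph{create} edges in $\mathcal{E}$, but it cannot, since $G-v\subseteq G$ implies $\mathcal{E}(G-v)\subseteq\mathcal{E}(G)$. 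Good: so $|E(\mathcal{E}(G))|-|E(\mathcal{E}(G-v))|\ge |F_v| \ge d_G(v)$, and induction applies.

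The remaining case is when $d_G(v) > d_{\mathcal{E}(G)}(v)$ for \emph{every} vertex $v$, i.e. every vertex has more neighbors in $G$ than in $\mathcal{E}(G)$. I would argue this case cannot persist: summing $d_G(v) > d_{\mathcal{E}(G)}(v)$ over all $v$ gives $2|E(G)| > 2|E(\mathcal{E}(G))|$, so $|E(\mathcal{E}(G))| < |E(G)|$, which is consistent and not yet a contradiction — so I need to extract structure. The point is that a vertex $v$ with more $G$-neighbors than $\mathcal{E}(G)$-neighbors must have two neighbors $u_1,u_2$ that are \emph{not} joined in $\mathcal{E}(G)$; but $u_1,u_2$ have the common neighbor $v$, so $\{u_1,u_2\}\in\mathcal{E}(G)$ by definition — contradiction! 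Hence for \emph{every} vertex $v$, the neighborhood $N_G(v)$ is a clique in $\mathcal{E}(G)$, which forces (via the ``friendship''-type structure or by a direct count) that $G$ is a disjoint union of cliques: indeed if $N_G(v)$ is always an $\mathcal{E}(G)$-clique it need not make $G$ a union of cliques, so instead I observe directly that in this leftover case we always have $F_v \supseteq \{vu : u\in N_G(v) \text{ and } \exists w\neq v, w\in N_G(u)\}$, and I would handle it by choosing $v$ to be a vertex in a smallest component: if that component is a single edge or isolated vertex, delete it (deleting an isolated vertex costs $0$; deleting a $K_2$-component costs $1$ edge of $G$ and $0$ of $\mathcal{E}(G)$ over $2$ vertices, so average $\frac12$ — exactly the budget). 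Otherwise the component $C$ has $\ge 3$ vertices and $\ge 2$ edges through some vertex $v$, so $v$ has two $G$-neighbors $u_1,u_2$, which are $\mathcal{E}(G)$-adjacent, putting us back in the first case for $v$ (as then $d_{\mathcal{E}(G)}(v)\ge 1$, and one can bootstrap). So the main obstacle, and where I'd spend the most care, is the bookkeeping in this degenerate case — precisely handling components that are single vertices or single edges so the $-\frac n2$ slack is exactly consumed, and verifying that in every other situation the ``delete a minimum-degree vertex with enough $\mathcal{E}(G)$-neighbors'' step genuinely gains back $d_G(v)$ edges without accidental creation of $\mathcal{E}$-edges (which monotonicity $\mathcal{E}(G-v)\subseteq\mathcal{E}(G)$ rules out).
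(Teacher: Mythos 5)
Your Case 1 (delete a vertex $v$ with $d_G(v)\le d_{\mathcal{E}(G)}(v)$, using monotonicity $\mathcal{E}(G-v)\subseteq\mathcal{E}(G)$) is sound, but the treatment of the remaining case has a genuine gap, and in fact the single-vertex-deletion scheme cannot be completed as you describe. First, the claimed contradiction is a non-sequitur: from $d_G(v)>d_{\mathcal{E}(G)}(v)$ it does not follow that two $G$-neighbors of $v$ are non-adjacent in $\mathcal{E}(G)$ --- the pairs of neighbors of $v$ are indeed all $\mathcal{E}(G)$-edges, but those edges are not incident to $v$, so they say nothing about $d_{\mathcal{E}(G)}(v)$. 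Second, your fallback dichotomy (``either a trivial component, or we are back in Case 1'') is false: take $G=C_4$. There every vertex has $d_G(v)=2$ while $\mathcal{E}(C_4)$ is a perfect matching, so $d_{\mathcal{E}(G)}(v)=1$ for all $v$; moreover deleting any vertex destroys only one $\mathcal{E}$-edge, whereas your induction step needs the number of destroyed $\mathcal{E}$-edges to be at least $d_G(v)$ (the per-vertex budget of $\tfrac12$ forces this, since the quantities are integers). So for $C_4$ --- which is exactly a tightness case of the lemma, $|E(\mathcal{E}(C_4))|=2=|E(C_4)|-\tfrac42$ --- no admissible vertex exists, every vertex has $d_{\mathcal{E}(G)}(v)\ge 1$, and the ``bootstrap'' back to Case 1 never happens. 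The case analysis is therefore not exhaustive and the proof does not close.

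The natural repair is essentially what the paper does: instead of one vertex, delete both endpoints of an edge $uv\in E(G)$ at once. Then $|E(G)|$ drops by $d_G(u)+d_G(v)-1$, while $u$ is $\mathcal{E}(G)$-adjacent to every vertex of $N_G(v)\setminus\{u\}$ and $v$ to every vertex of $N_G(u)\setminus\{v\}$, so at least $d_G(u)+d_G(v)-2$ edges of $\mathcal{E}(G)$ are destroyed; the net loss is exactly $1$, which matches the budget $\tfrac{n}{2}-\tfrac{n-2}{2}=1$ for two vertices, and the induction closes with no case distinction at all (empty graphs and $n\le 3$ are the base cases). If you want to keep your one-vertex framework you would have to treat components such as $C_4$ (and any other configuration where every vertex loses fewer $\mathcal{E}$-edges than its $G$-degree) separately, which is precisely the bookkeeping your sketch leaves unresolved.
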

\begin{proof}
We prove by induction on $n$.
Firstly, for $1\le n\le 3$, the inequation is apparently true. Now let $G$ be a graph on $n\ge 4$ vertices and suppose the inequation holds for any graph on less than $n$ vertices.
If $G$ is an empty graph, we are done. Otherwise, pick an edge $uv\in E(G)$. By deleting the vertices $u,v$ and all the incidence edges, we get
$$|E(G-\{u,v\})|=|E(G)|-d_{G}(u)-d_{G}(v)+1\mbox{.}$$
On the other hand, the deletion must destroy all the edges incident with one of $u$ and $v$ in $\mathcal{E}(G)$. Note that $u$ (resp. $v$) incident with all the vertices in $N_{G}(v)-\{u\}$ (resp. $N_{G}(u)=\{v\}$) within $\mathcal{E}(G)$. In other words,
$$|E(\mathcal{E}(G-\{u,v\}))|\le |E(\mathcal{E}(G))|-d_{G}(u)-d_{G}(v)+2\mbox{.}$$
Therefore, by induction,
\begin{eqnarray*}
|E(\mathcal{E}(G))|&\ge&|E(\mathcal{E}(G-\{u,v\}))|+d_{G}(u)+d_{G}(v)-2\\
&\ge& |E(G-\{u,v\})|-\frac{n-2}{2}+d_{G}(u)+d_{G}(v)-2\\
&=&|E(G-\{u,v\})|+d_{G}(u)+d_{G}(v)-1-\frac{n}{2}\\
&=&|E(G)|-\frac{n}{2}\mbox{.}
\end{eqnarray*}
This completes the proof.
\end{proof}

\begin{proof}[Proof of Theorem~\ref{F_5}]
It is sufficient to show that every $3$-graph $H$ on $n$ vertices with $\delta_1(H)\ge \frac{1}{8}n^2+\frac{5}{4}n$ has an $F_5$-covering.

Suppose the contrary that there is a $3$-graph $H$ on $n$ vertices with $\delta_1(H)\ge \frac{1}{8}n^2+\frac{5}{4}n$ and a vertex $u\in V(H)$ is not contained in any copy of $F_5$ in $H$. By definition, the link graph $H_u$ contains at least $\delta_1(H)$ edges, so it is not empty. We have the following key claim.
\begin{claim}\label{key}
Let $xy\in E(H_u)$ be an edge in $H_u$, then the four sets $E(H_u-\{x,y\})$, $E(H_x-\{u\})$, $E(H_y-\{u\})$ and $E(\mathcal{E}(H_u-\{x,y\}))$ are pairwise disjoint.
\end{claim}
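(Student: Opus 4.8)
The plan is to exploit the assumption that $u$ lies in no copy of $F_5$, which forbids very specific edge configurations in the link graphs $H_u$, $H_x$, $H_y$. Since $F_5=\{123,124,345\}$, a copy of $F_5$ through $u$ can be built in several ways depending on which role $u$ plays. The key observation is that $xy\in E(H_u)$ means $uxy\in E(H)$, so $\{x,y\}\in E(H_u)$, and any additional edge of $H$ that, together with $uxy$, completes an $F_5$ on a vertex set containing $u$ must be avoided. First I would enumerate, for each of the four edge sets in the claim, what it would mean for two of them to share a common element, and in each case exhibit an explicit copy of $F_5$ containing $u$, contradicting the choice of $u$. Concretely: an edge $ab\in E(H_u-\{x,y\})$ gives $uab\in E(H)$ with $a,b\notin\{x,y\}$; an edge $ab\in E(H_x-\{u\})$ gives $xab\in E(H)$ with $a,b\neq u$; similarly for $H_y-\{u\}$; and an edge $ab\in E(\mathcal{E}(H_u-\{x,y\}))$ means $a,b\notin\{x,y\}$ have a common neighbor $w$ in $H_u-\{x,y\}$, i.e. $uaw,ubw\in E(H)$ with $w\notin\{x,y\}$.

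The main work is the case analysis of pairwise intersections. For instance, if $ab\in E(H_u-\{x,y\})\cap E(H_x-\{u\})$, then $uab\in E(H)$ and $xab\in E(H)$; together with $uxy\in E(H)$ one checks that $\{u,a,b,x,y\}$ (taking care that these five vertices are genuinely distinct, which follows since $a,b\notin\{x,y\}$ and $u\notin\{a,b,x,y\}$) carries the edges playing the roles $123=uxy$ is not quite right — rather I would match $\{uab, xab, uxy\}$ or $\{uab,xab,\dots\}$ to the pattern of $F_5$ by identifying the two edges sharing a pair ($uab$ and $xab$ share $\{a,b\}$, analogous to $123,124$ sharing $\{1,2\}$) and the third edge ($uxy$) meeting their symmetric difference. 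One does this matching for each of the $\binom{4}{2}=6$ pairs of sets; the pair $E(H_x-\{u\})$ vs.\ $E(H_y-\{u\})$ uses the edge $uxy$ to bridge $x$ and $y$, and the pairs involving $E(\mathcal{E}(H_u-\{x,y\}))$ use the common-neighbor vertex $w$ as an extra vertex so that the resulting $F_5$ may live on six vertices with one edge being $uaw$ or $ubw$. In every case the punchline is the same: we produce three edges of $H$ forming $F_5$ with $u$ among the five vertices, a contradiction.

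I expect the main obstacle to be bookkeeping rather than any genuine difficulty: one must verify in each of the six cases that the relevant vertices are pairwise distinct (so that we really get a copy of $F_5$ on five labelled vertices and not a degenerate configuration), and one must correctly identify which edge plays which role in the pattern $\{123,124,345\}$ — in particular that two of the three edges share exactly a pair and the third shares exactly one vertex with their union in the right way. A careful choice of notation, writing out $F_5$ as "two edges sharing a pair, plus a pendant edge attached at one of the remaining vertices," keeps this manageable. Once the claim is established, disjointness gives $\delta_1(H)\le |E(H_u)| = |E(H_u-\{x,y\})| + (\text{edges of }H_u\text{ meeting }\{x,y\})$, and combined with Lemma~\ref{2} applied to $G=H_u-\{x,y\}$ and a count of the degrees of $x,y$ in $H$, this will force $\delta_1(H)<\frac{1}{8}n^2+\frac{5}{4}n$, completing the upper bound; but that final estimate is outside the scope of the claim itself.
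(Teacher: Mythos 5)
There is a genuine gap in your plan: the pair $E(H_u-\{x,y\})$ versus $E(\mathcal{E}(H_u-\{x,y\}))$ cannot be settled by "exhibit an explicit copy of $F_5$ containing $u$." A common element $ab$ of these two sets means $ab\in E(H_u-\{x,y\})$ and $a,b$ have a common neighbour $w$ in $H_u-\{x,y\}$, i.e.\ $\{a,b,w\}$ spans a triangle in $H_u$; the hypergraph edges this produces are $uab, uaw, ubw$ (plus $uxy$), and these all contain $u$. But no three edges through a common vertex can ever form $F_5$: in $F_5=\{123,124,345\}$ the third edge meets the first two in \emph{different} single vertices, so no vertex lies in all three edges — the configuration you get is a $K_4^{(3)-}$ on $\{u,a,b,w\}$, not an $F_5$, and no edge avoiding $u$ is available from the hypothesis alone. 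Your other five pairwise cases are fine and match the paper's direct constructions (e.g.\ $abx,abu,uxy$; $abx,aby,xyu$; $uwa,uwb,abx$), modulo the minor slip that the resulting $F_5$ always lives on five vertices, even when the configuration involves six.

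The paper closes this case differently, and crucially uses the minimum-degree hypothesis $\delta_1(H)\ge\frac18 n^2+\frac54 n$, not just the assumption that $u$ is uncovered: it shows $H_u$ is triangle-free by a counting argument. If $\{x',y',z'\}$ were a triangle in $H_u$, then applying your cases (i)--(ii) to each of its three edges shows that $E(H_{x'}-\{u\})$, $E(H_{y'}-\{u\})$, $E(H_{z'}-\{u\})$ and $E(H_u-\{x',y',z'\})$ are pairwise disjoint subsets of $\binom{V(H_u)}{2}$, and each has size at least $\delta_1(H)$ minus a linear term; summing gives $4\delta_1(H)-O(n)\le\binom{n-1}{2}$, contradicting the degree bound. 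Triangle-freeness of $H_u$ then yields $E(H_u-\{x,y\})\cap E(\mathcal{E}(H_u-\{x,y\}))=\emptyset$. So to repair your proposal you must either import this counting step (and hence the degree hypothesis) or find another way to exclude triangles in $H_u$; a purely configuration-based $F_5$-exhibition will not do it.
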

\begin{proof}
(i) If $E(H_u-\{x,y\})\cap E(H_x-\{u\})\neq\emptyset$, we pick a pair $ab$ in it. By definition, $abx, abu, uxy\in E(H)$, which form a copy of $F_5$, a contradicition. The same thing holds for $E(H_u-\{x,y\})$ and $E(H_y-\{u\})$.\\
(ii) If $E(H_x-\{u\})\cap E(H_y-\{u\})\neq \emptyset$, we pick a pair $ab$ in it. Then $abx, aby, xyu$ form a copy of $F_5$, which is a contradiction.\\
(iii) To show $E(H_u-\{x,y\})\cap E(\mathcal{E}(H_u-\{x,y\}))=\emptyset$, we only need to show that there is no triangle in $H_u$. By (i) and (ii), $E(H_u-\{x,y\})$, $E(H_x-\{u\})$ and $E(H_y-\{u\})$ are pairwise disjoint for any $xy\in E(H_u)$.
If there is a traingle $\{xy,xz,yz\}\subset E(H_u)$, then it is easy to see that $E(H_x-\{u\})$, $E(H_y-\{u\})$, $E(H_z-\{u\})$ and $E(H_u-\{x,y,z\})$ are pairwise disjoint since $E(H_u-\{x,y,z\})=E(H_u-\{x,y\})\cap E(H_u-\{x,z\})\cap E(H_u-\{y,z\})$.
This means 
$$|E(H_u-\{x,y,z\})|+|E(H_x-\{u\})|+|E(H_y-\{u\})|+|E(H_z-\{u\})|\le |\binom{V(H_u)}{2}|=\binom{n-1}{2}\mbox{.}$$
Also, $|E(H_u-\{x,y,z\})|\ge \delta_1(H)-(3n-6)$ and $|E(H_w-\{u\})|\ge\delta_1(H)-(n-1)$ for $w\in\{x,y,z\}$. This gives $4\delta_1(H)-((3n-6)+3(n-1))\le\binom{n-1}{2}$, a contradiction by $\delta_1(H)\ge \frac{1}{8}n^2+\frac{5}{4}n$.\\
(iv) If $E(H_x-\{u\})\cap E(\mathcal{E}(H_u-\{x,y\}))\neq\emptyset$, we pick a pair $ab$ in it. By the definition of $\mathcal{E}(H_u-\{x,y\})$, there exists a vertex $c$ with $ac,bc\in E(H_u-\{x,y\})$. Thus, $uca, ucb, abx$ form a copy of $F_5$, a contradicition. The same thing holds for $E(H_y-\{u\})=\emptyset$ and $E(\mathcal{E}(H_u-\{x,y\}))$. 
\end{proof}
Pick an edge $xy\in E(H_u)$. It is easy to check that $|E(H_u-\{x,y\})|\ge\delta_1(H)-(2n-3)$ and $|E(H_x-\{u\})|\ge \delta_1(H)-(n-1)$. By Lemma~\ref{2}, 
$$|E(\mathcal{E}(H_u-\{x,y\}))|\ge |E(H_u-\{x,y\})|-\frac{n-3}{2}\ge\delta_1(H)-(\frac{5}{2}n-\frac{9}{2})\mbox{.}$$
By Claim~\ref{key}, $E(H_u-\{x,y\})$, $E(H_x-\{u\})$, $E(H_y-\{u\})$ and $E(\mathcal{E}(H_u-\{x,y\}))$ are pairwise disjoint. This means
$$|E(H_u-\{x,y\})|+|E(H_x-\{u\})|+|E(H_y-\{u\})|+|E(\mathcal{E}(H_u-\{x,y\}))|\le |\binom{V(H_u)}{2}|=\binom{n-1}{2}\mbox{.}$$
Thus,
$$4\delta_1(H)-(2n-3)-2(n-1)-(\frac{5}{2}n-\frac{9}{2})\le\binom{n-1}{2}\mbox{,}$$
a contradiction by $\delta_1(H)\ge \frac{1}{8}n^2+\frac{5}{4}n$.
\end{proof}

\section{Other cases: proof of Theorem~\ref{other}}
\subsection{$LP_3$}
\begin{proof}[Proof of (1)]
For the lower bound, we simply consider the following $3$-graph $G$ called a {\it trivial intersecting family} on $V(G)=\{0\}\cup[n-1]$ with edge set $E(G)=\{\{0\}\}\vee\binom{[n-1]}{2}$.

For the upper bound, suppose the contrary that there is a $3$-graph $H$ on $n\ge 13$ vertices with $\delta_1(H)\ge n-1$ while some vertex $u\in V(H)$ is not contained in any copy of $LP_3$ in $H$.
\begin{claim}\label{LP_3-1}
We can find a copy of $K_{1,2}\cup K_{1,1}$ in the graph $H_u$.
\end{claim}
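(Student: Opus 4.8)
The plan is to extract the desired subgraph $K_{1,2}\cup K_{1,1}$ from $H_u$ using only the minimum degree condition $\delta_1(H)\ge n-1$, which forces $|E(H_u)|\ge n-1$, and the assumption that $u$ lies in no copy of $LP_3$. First I would observe what the $LP_3$-freeness at $u$ says structurally about $H_u$: a copy of $LP_3$ through $u$ can arise in two ways, namely $u$ sits on the middle edge (so $H$ contains $abu$, plus $ac$-type edges $ade$ and $bfg$ with all vertices distinct) or $u$ sits on an end edge (so $H$ contains $uab$, $bcd$, $defg$ with appropriate disjointness). Translating the ``$u$ on the middle edge'' obstruction into the link graph: if $xy\in E(H_u)$ and there are two further edges of $H$, one through $x$ avoiding $u,y$ and one through $y$ avoiding $u,x$, whose vertex sets can be made disjoint from each other and from $\{x,y\}$, we get $LP_3$. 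So the key consequence is that for every edge $xy$ of $H_u$, the ``private neighbourhoods'' of $x$ and $y$ in the appropriate link graphs are heavily constrained.

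The concrete target $K_{1,2}\cup K_{1,1}$ in $H_u$ is a path $P_3$ (a vertex $x$ adjacent to two vertices $a,b$) together with a disjoint single edge $cd$, all five vertices distinct. So I would argue as follows. Since $|E(H_u)|\ge n-1\ge 12$ and $H_u$ has $n-1$ vertices, $H_u$ is not a matching (a matching on $n-1$ vertices has at most $\lfloor (n-1)/2\rfloor < n-1$ edges), so $H_u$ contains a path on three vertices, i.e.\ a copy of $K_{1,2}$, say on vertices $\{x,a,b\}$ with $xa,xb\in E(H_u)$. Delete these three vertices; the remaining graph $H_u-\{x,a,b\}$ still has at least $(n-1)-3(n-2)$ edges by the usual degree-sum estimate, but that crude bound is negative, so instead I would bound more carefully: $H_u-\{x,a,b\}$ loses at most $d_{H_u}(x)+d_{H_u}(a)+d_{H_u}(b)$ edges, and I need to know these degrees are not too large, which is exactly where the $LP_3$-freeness must be used — an edge $xa$ of $H_u$ together with large link-neighbourhoods of $x$ and $a$ inside $H$ would create an $LP_3$ with $u$ on the middle edge.

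So the real content, and the step I expect to be the main obstacle, is showing that $H_u$ cannot be ``almost a clique through a few vertices'' — i.e.\ ruling out the configuration where every edge of $H_u$ meets a small vertex set (a small vertex cover), which is the only way to have many edges but no disjoint $K_{1,2}\cup K_{1,1}$. Here I would invoke the minimum degree condition on the vertices of $H$ that are \emph{not} $u$: if $H_u$ had a vertex cover of size $\le 2$, then almost all edges of $H$ not containing $u$ would have to pass through the corresponding two vertices, but $\delta_1(H)\ge n-1$ at a generic third vertex forces many edges $vwz$ with $v,w,z\ne u$ and avoiding that cover, and such an edge, combined with a suitable edge of $H_u$, produces an $LP_3$ through $u$ (as an end-edge copy). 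Once a vertex cover of size $\le 2$ is excluded, $H_u$ has two independent edges, and a short case analysis (either they extend to a $P_3$ plus a disjoint edge, or one reuses a common neighbour to build the $K_{1,2}$) yields the claimed $K_{1,2}\cup K_{1,1}$. I would organize the proof so that the bound $n\ge 13$ is exactly what makes these counting inequalities go through.
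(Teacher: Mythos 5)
Your opening move is fine (a matching on $n-1$ vertices has fewer than $n-1$ edges, so $H_u$ contains a $K_{1,2}$), but from there the argument has two genuine gaps. First, the step you yourself flag as the main obstacle --- excluding a small vertex cover of $H_u$ --- is handled by an appeal to $LP_3$-freeness that does not work as sketched. If $H_u$ has cover $\{p,q\}$, nothing forces ``almost all edges of $H$ not containing $u$'' to pass through $p,q$; that is a statement about $H-u$, while the cover only constrains the link of $u$. And an edge $vwz$ of $H-\{u\}$ avoiding the cover, together with one edge $uxy$ of $H$, is \emph{not} yet a copy of $LP_3$: a loose path needs a middle edge meeting $\{x,y\}$ in exactly one vertex and meeting $vwz$ in exactly one vertex, and your sketch never produces this linking edge. (In fact no hypergraph input is needed here: if $\{p,q\}$ covers $H_u$, then $d_{H_u}(p)+d_{H_u}(q)\ge |E(H_u)|\ge n-1$, so say $d_{H_u}(p)\ge 6$; if $q$ has any neighbour $b\neq p$, two neighbours of $p$ avoiding $\{q,b\}$ give the $K_{1,2}$ and $qb$ the disjoint edge; otherwise $H_u$ is a star with at most $n-2<n-1$ edges, a contradiction.) Second, your endgame --- ``once two independent edges exist, a short case analysis yields the claim'' --- is exactly the content of the claim and is only asserted, not carried out; since the extremal example is the star $K_{1,n-2}$ with $n-2$ edges, any correct finish must genuinely use $|E(H_u)|\ge n-1$, and your sketch never makes that use explicit after the first step.

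For comparison, the paper's proof uses no $LP_3$-freeness and no degrees of vertices other than $u$: it is a self-contained max-degree case analysis in the link graph. Taking $v$ of maximum degree $d$ in $H_u$, handshaking gives $d\ge 2$; if $d=2$ the graph is $2$-regular and an edge avoiding the two neighbours of $v$ exists since $|E(H_u)|-4>0$; if $d=3$ at most $3$ edges lie inside $N_{H_u}(v)$ while $|E(H_u)|-3>3$ edges avoid $v$; if $d\ge 4$ at least $|E(H_u)|-(n-2)\ge 1$ edge avoids $v$ and two of its four chosen neighbours miss that edge. Each case hands you $K_{1,2}\cup K_{1,1}$ directly. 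Your skeleton (no small cover, then two disjoint edges, then extend) could be repaired along purely graph-theoretic lines --- e.g.\ if some endpoint of the two disjoint edges has a neighbour outside their four vertices you are done, and otherwise counting edges inside/outside that quadruple against $|E(H_u)|\ge n-1$ finishes --- but as written the proposal relies on an unjustified hypergraph detour and an unproven final case analysis.
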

\begin{proof}
Note that $H_u$ is a graph on $n-1$ vertices with at least $\delta(H)\ge n-1$ edges. Pick $v\in V(H_u)$ with the maximum degree $d$ of $H_u$. 
By Handshaking Lemma, $(n-1)d\ge\sum_{x\in V(H_u)}d_{H_u}(x)=2E(H_u)\ge 2(n-1)$. Thus, $d\ge 2$, and if $d=2$, then $d_{H_u}(x)=2$ for all the vertex $x\in V(H_u)$.
Now suppose $d=2$ and $N_{H_u}(v)=\{x,y\}$. Since $d_{H_u}(x)=d_{H_u}(y)=2$, there are at most $2+2=4$ edges incident with at least one of $x$ and $y$ in $H_u$. Hence we can pick an edge $ab\in E(H_u-\{x,y\})$ since $|E(H_u-\{x,y\})|\ge |E(H_u)|-4>0$. Clearly, $v\neq a,b$. This means the three edges $vx,vy,ab$ form a $K_{1,2}\cup K_{1,1}$ in $H_u$, a contradiction.
Suppose $d\ge 4$. Pick $4$ vertices $w,x,y,z\in N_{H_u}(v)$. Since $d\le |V(H_u)|-1=n-2$, there exists at least $|E(H_u)|-(n-2)\ge 1$ edge $ab\in E(H_{u}-\{v\})$. Note that $v\neq a,b$ and at least two of $w,x,y,z$ are not contained in $\{a,b\}$. Without loss of generality, suppose $w,x\notin \{a,b\}$, then the three edges $vw,vx,ab$ together form a $K_{1,2}\cup K_{1,1}$ in $H_u$, a contradiction.
Thus, $d=3$. Let $N_{H_u}(v)=\{x,y,z\}$. It is easy to see that there are at most $\binom{3}{2}=3$ edges contained in $\{x,y,z\}$. Note that $|E(H_u-\{v\})|=|E(H_u)|-3>3$. We can pick an edge $ab\in E(H_u-\{v\})$ which is not contained in $\{x,y,z\}$. In other words, $|\{a,b\}\cap \{x,y,z\}|\le 1$, so we can pick two vertices in $\{x,y,z\}$, say $x$ and $y$, which are not in $\{a,b\}$. Hence, the three edges $vx,vy,ab$ form a $K_{1,2}\cup K_{1,1}$ in $H_u$, a contradiction.
\end{proof}
\begin{claim}\label{LP_3-2}
We can find a copy of $K_{1,3}\cup K_{1,2}$ in the graph $H_u$.
\end{claim}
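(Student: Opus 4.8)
The goal is to find a copy of $K_{1,3}\cup K_{1,2}$ in $H_u$, i.e. a vertex $v$ with three distinct neighbors and, disjointly from $\{v\}\cup N(v)$-portion used, another vertex $w$ with two neighbors. The plan is to bootstrap from Claim~\ref{LP_3-1}: we already know $H_u$ contains a $K_{1,2}\cup K_{1,1}$, and the absence of an $LP_3$ through $u$ forbids certain configurations in $H_u$. The key translation to keep in mind is that if $v$ has neighbors $a,b$ in $H_u$ and $c,d$ is an edge in $H_u$ disjoint from $\{v,a,b\}$, together with a suitable extension they would build $LP_3$; more precisely, a $K_{1,2}$ centered at $v$ using $\{a,b\}$ plus a disjoint $K_{1,2}$ centered at $w$ using $\{c,d\}$ (all five vertices distinct, none equal to $u$) yields edges $uav, ubv$ (wait — recheck: the three edges of $LP_3=\{123,345,567\}$ need to be obtained as $u\cup e$ for a path-like structure in $H_u$). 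The right statement is: if in $H_u$ we can find three vertices $a,v,b$ with $av,vb\in E(H_u)$ and a fifth vertex $w\notin\{a,v,b\}$ with an edge $wc\in E(H_u)$ where also $c\notin\{a,v,b\}$, then $\{u,a,v\},\{v,b,?\}$... I should instead lift the known characterization: a $K_{1,3}\cup K_{1,2}$ in $H_u$ corresponds exactly to an $LP_3$ through $u$ after picking the middle edge appropriately, so the claim is really "if there is no $LP_3$ through $u$ then $H_u$ has no $K_{1,3}\cup K_{1,2}$" is false — rather we prove the contrapositive flavored statement directly from the edge count.

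First I would let $v$ be a maximum-degree vertex of $H_u$ with degree $d$, and record $d\ge 2$ (indeed $d\ge 3$ once the $K_{1,2}\cup K_{1,1}$ case analysis from Claim~\ref{LP_3-1} is reused, since a graph with $\ge n-1$ edges on $n-1$ vertices and max degree $2$ is a disjoint union of paths and cycles, which easily contains $K_{1,2}\cup K_{1,1}$ but we need the stronger object). The main split is on the size of $d$. If $d$ is large, say $d\ge 5$, then $v$ already supplies the $K_{1,3}$ using any three neighbors $x,y,z$, and we must find a disjoint edge $ab$ avoiding $\{v,x,y,z\}$ together with a sixth vertex forming $K_{1,2}$; counting edges not incident to $\{v,x,y,z\}$ and using $\delta_1(H)\ge n-1$ shows such an edge (and in fact a cherry) survives, because removing five vertices destroys at most a bounded-in-$n$ linear number of edges while $|E(H_u)|\ge n-1$. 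The delicate regime is $d=3$ or $d=4$: here $v$ barely gives a $K_{1,3}$, so after fixing three neighbors $x,y,z$ of $v$ we have only $n-1-4 = n-5$ other vertices to host the $K_{1,2}$, and we need an edge $ab$ with a common... no, a path $a\!-\!b\!-\!c$ (that's the $K_{1,2}$) disjoint from $\{v,x,y,z\}$. We bound $|E(H_u - \{v,x,y,z\})| \ge |E(H_u)| - d_{H_u}(v) - d_{H_u}(x)-d_{H_u}(y)-d_{H_u}(z) \ge (n-1) - 4(n-2)$, which is negative and hence useless, so instead I would count more carefully: edges incident to $x,y,z$ but not $v$ number at most $3(n-2) - (\text{edges among }x,y,z) - 3$, and in the graph $H_u-v$ (which has $\ge n-1-3 = n-4$ edges on $n-2$ vertices) we seek a $K_{1,2}$ avoiding the three further vertices $x,y,z$; a $K_{1,2}$-free graph (linear forest of max degree... no, $K_{1,2}$-free means a matching) has at most $\lfloor (n-2)/2\rfloor$ edges, so $H_u-v$ contains a $K_{1,2}$, and if every $K_{1,2}$ in $H_u - v$ must meet $\{x,y,z\}$ we derive a contradiction by again removing $x,y,z$ and noting $H_u - \{v,x,y,z\}$ has $\ge (n-4) - 3\cdot(\text{max deg in } H_u-v)$ edges — this still needs the max degree of $H_u$ to be controlled, which is exactly where $d\le $ (small) helps: since $v$ had maximum degree $d\in\{3,4\}$, every vertex of $H_u$ has degree $\le 4$, so $H_u-\{v,x,y,z\}$ retains $\ge (n-1) - 4\cdot 4 = n-17 > 0$ edges for $n\ge 13$... let me recheck the constant: removing $v$ kills $\le 4$ edges, removing $x,y,z$ kills $\le 3\cdot 4 = 12$ more, total $\le 16$, leaving $\ge n-1-16 = n-17$ edges, positive exactly when $n\ge 18$; to cover $13\le n\le 17$ one notes in that range $\delta_1(H)\ge n-1\ge 12$ forces more structure, or one tightens the count using that the four removed vertices share edges. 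So the surviving edge $ab$ in $H_u-\{v,x,y,z\}$ must then be upgraded to a $K_{1,2}$: either $a$ or $b$ has another neighbor outside $\{v,x,y,z\}$ — and if not, then $N_{H_u}(a)\cup N_{H_u}(b)\subseteq\{v,x,y,z,a,b\}$, bounding degrees, giving enough slack to find yet another disjoint edge.

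The single hardest step I anticipate is the $d=3$ case and making the constants work down to $n=13$: the naive "delete vertices, count surviving edges" argument is lossy, and I expect to need the extra leverage that maximum degree is $\le 3$ (from the max-degree choice) to both (a) guarantee a surviving edge after deleting $\{v,x,y,z\}$ and (b) guarantee that surviving edge extends to a cherry, since a maximum-degree-$3$ graph with many edges has two vertex-disjoint cherries unless it is a near-perfect matching plus a few triangles/stars, cases one handles by hand. I would structure the proof as: reduce to $d\ge 3$; dispatch $d\ge 5$ by a clean edge count; handle $d=4$ similarly with the degree-$\le 4$ bound; and finally grind the $d=3$ case by classifying $H_u$ up to the small exceptional graphs on few vertices, using $n\ge 13$ to ensure $H_u$ is "spread out" enough that a $K_{1,3}$ (at $v$) and a vertex-disjoint $K_{1,2}$ coexist. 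As with Claim~\ref{LP_3-1}, I would phrase everything in terms of $E(H_u-S)\ge |E(H_u)| - \sum_{s\in S} d_{H_u}(s) + (\text{edges within } S)$ and the trivial fact that a $K_{1,2}$-free graph is a matching, so $|E(G)| \ge \lfloor |V(G)|/2\rfloor + 1$ forces a $K_{1,2}$.
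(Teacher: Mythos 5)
Your plan tries to prove the claim by edge-counting and maximum-degree case analysis inside the link graph $H_u$ alone, but at that level of generality the statement is simply false, so no amount of tightening the constants can close the argument. A graph on $n-1$ vertices with at least $n-1$ edges need not contain $K_{1,3}\cup K_{1,2}$: a disjoint union of triangles (or a single Hamilton cycle on $V(H_u)$) has $n-1$ edges and maximum degree $2$, hence no $K_{1,3}$ at all; and a star $K_{1,n-3}$ plus one edge between two leaves has $n-1$ edges, but every $K_{1,3}$ is centred at the hub and every cherry meets the hub, so again there is no $K_{1,3}\cup K_{1,2}$. This also shows that your parenthetical assertion ``indeed $d\ge 3$'' has no justification from Claim~\ref{LP_3-1} (which only yields $K_{1,2}\cup K_{1,1}$, an object that unions of cycles do contain), and that your $d\ge 5$ case is not ``a clean edge count'': deleting the high-degree vertex and three of its neighbours can destroy all but a handful of edges, as the star-plus-edge example shows. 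You partially notice these problems (the negative bound $(n-1)-4(n-2)$, the unresolved range $13\le n\le 17$, the unfinished $d=3$ grind), but they are not technical loose ends — they are symptoms of the approach being unable to work.

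The missing idea is to use the $3$-graph structure, i.e.\ the minimum degree of vertices other than $u$ together with the no-$LP_3$ hypothesis applied to their links. The paper starts from the $K_{1,2}\cup K_{1,1}$ of Claim~\ref{LP_3-1}, say $aa_1,aa_2,bb_1\in E(H_u)$, and shows that for the leaf $a_1$ every edge of $H_{a_1}-\{u\}$ must lie inside $\binom{\{a,a_2,b,b_1\}}{2}$: any edge $xy$ of $H_{a_1}$ avoiding $u$ and meeting $\{a,a_2,b,b_1\}$ in at most one vertex combines with $ua a_1$ (or $ua a_2$, $ubb_1$) to give an $LP_3$ through $u$. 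Hence $|E(H_{a_1}-\{u\})|\le 6$, and since $|E(H_{a_1})|=d_H(a_1)\ge n-1$, almost all edges at $a_1$ contain $u$, i.e.\ $d_{H_u}(a_1)\ge n-7\ge 6$; similarly $d_{H_u}(a_2)\ge 6$. Then three neighbours of $a_1$ avoiding $a_2$ and two further neighbours of $a_2$ avoiding those give the desired $K_{1,3}\cup K_{1,2}$ greedily. Your proposal never examines the link of any vertex other than $u$ and never uses $\delta_1(H)\ge n-1$ for those vertices, which is exactly the leverage needed; without it the claim cannot be recovered.
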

\begin{proof}
By Claim~\ref{LP_3-1}, we choose a set of $5$ vertices $\{a, a_1, a_2, b, b_1\}\subset V(H_u)$ with $aa_1, aa_2, bb_1 \in E(H_u)$. 
We claim that $H_{a_1}-\{u\}\subset\binom{\{a,a_2,b,b_1\}}{2}$. 
Otherwise, there exists an edge $xy\in E(H_{a_1}-\{u\})$ with $|\{x,y\}\cap \{a,a_2,b,b_1\}|\le 1$.
If $\{x,y\}\cap\{a,a_2,b,b_1\}=\emptyset$, then $\{xya_1$, $a_1au$, $ubb_1\}$ is a copy of $LP_3$ in $H$, a contradcition. Thus, exactly one of $a,a_2,b,b_1$ is contained in $\{x,y\}$. Without loss of generality, suppose that $x$ is this vertex.
If $x=a$, then $\{a_1ya$, $aa_2u$, $ubb_1\}$ is a copy of  $LP_3$; If $x=a_2$, then $\{a_1ya_2,a_2au,ubb_1\}$ is a copy of  $LP_3$; If $x=b$, then $\{a_1yb,bb_1u,uaa_2\}$ is a copy of  $LP_3$; If $x=b_1$, then $\{a_1yb_1,b_1bu,uaa_2\}$ is a copy of  $LP_3$. Any of the four cases leads to a contradiction.
Therefore, $H_{a_1}-\{u\}\subset\binom{\{a,a_2,b,b_1\}}{2}$. In particular, $|E(H_{a_1}-\{u\})|\le\binom{4}{2}=6$ and then $d_{H_u}(a_1)=d_{H_{a_1}}(u)=|E(H_{a_1})|-|E(H_{a_1}-\{u\})|\ge (n-1)-6\ge 6$. Similarly, $d_{H_u}(a_2)\ge 6$.
Now, pick three vertices $c_1,c_2,c_3\in N_{H_u}(a_1)\backslash\{a_2\}$, then we still have at least $6-1-3=2$ vertices $c_4,c_5\in N_{H_u}(a_2)\backslash\{a_1,c_1,c_2,c_3\}$. This gives $5$ edges $a_1c_1, a_1c_2, a_1c_3, a_2c_4,a_2c_5\in E(H)$, which form a $K_{1,3}\cup K_{1,2}$ in $H_u$. 
\end{proof}
Now by Claim~\ref{LP_3-2}, we can choose a set of $7$ vertices $\{a, a_1, a_2, a_3, b, b_1, b_2\}\subset V(H_u)$ with $aa_1, aa_2, aa_3, bb_1, bb_2\in E(H_u)$. Similarly as the proof in Claim~\ref{LP_3-2}, one can check by simple discussions that, for $i=1,2,3$, $E(H_{a_i}-\{u\})\subseteq\{ab\}$. This means $|E(H_{a_i}-\{u\})|\le 1$ and then $d_{H_u}(a_i)=d_{H_{a_i}}(u)=|E(H_{a_i})|-|E(H_{a_i}-\{u\})|\ge (n-1)-1\ge n-2$ for any $i\in[3]$. Hence for $i\in[3]$, $ab\in E(H_{a_i}-\{u\})$ and $a_iv\in E(H_u)$ for any $v\in V(H_u)\backslash\{a_i\}$. In particular, $a_1ab, ua_2a_3\in E(H)$. Together with the edge $ub_1b\in E(H)$, we get a copy of $LP_3$ in $H$ covering $u$, a contradiction.
\end{proof}

\subsection{$TP_3$}
\begin{proof}[Proof of the lower bound of (2)]
For $n\equiv 0,2 \mod 3$, consider the $3$-graph $F_{n-2,2}$ on $[n]$ with the edge set $\{\{n-1,n\}\}\vee\binom{[n-2]}{1}\cup\binom{[n-2]}{3}$. one can check that $\delta_1(F_{n-2,2})=n-2$ for $n\ge 6$ and there is no copy of $TP_3$ containing the vertex $n$ in $F_{n-2,2}$.\\
For $n\equiv 1 \mod 3$, suppose $n=3k+1$ for some integer $k\ge 2$. Consider a $3$-graph $F$ on the vertex set $\{u\}\cup\bigcup_{i=1}^kA_i$ with $|A_i|=3$ for any $i\in[k]$. The edge set of $F$ is 
$$E(F)=\bigcup_{i=1}^k\left(\{u\}\vee\binom{A_i}{2}\right)\cup\bigcup_{\{i,j,k\}\in\binom{[k]}{3}}\left(\binom{A_i}{1}\vee\binom{A_j}{1}\vee\binom{A_k}{1}\right)\mbox{.}$$
One can also check that $\delta_1(F)=d_{F}(u)=3k=n-1$ and $F$ has no copy of $TP_3$ containing $u$.
\end{proof}
\begin{proof}[Proof of the upper bound of (2)]
Let $g(n)$ be a function with $g(n)=n-1$ for $n\equiv 0,2 \mod 3$ and $g(n)=n$ for $n\equiv 1 \mod 3$.
Suppose the contrary that there is a $3$-graph $H$ on $n\ge 6$ vertices with $\delta_1(H)\ge g(n)$ and there is a vertex $u\in V(H)$ which is not contained in any copy of $TP_3$ in $H$.

Apparently, there is no copy of $P_4$ contianed in $H_u$. Otherwise, there must be $4$ vertices $x_1, x_2, x_3, x_4 \in V(H_u)$ with $x_1x_2, x_2x_3, x_3x_4\in E(H_u)$, and we can pick $\{x_1x_2u, x_2ux_3, ux_3x_4\}$ as a copy of $TP_3$ in $H$, a contradiction. This implies that any component of $H_u$ can only be a $K_{1,t}$ for some $t\ge 0$ or a $K_3$. Let $n_t$ be the number of components isomorphic to $K_{1,t}$ for any $t\ge 0$ and let $m$ be the number of components isomorphic to $K_3$ in $H$. Then $n-1=3m+\sum_{t\ge 0}(t+1)n_t$ and
$$d_{H}(u)=|E(H_u)|=3m+\sum_{t\ge 0}tn_t=n-1-\sum_{t\ge 0}n_t\mbox{.}$$
If there exists some $i\ge 0$ with $n_i\neq 0$, then $d_{H}(u)\le n-2<g(n)$, a contradiction.
Thus, $n_i=0$ for any $i\ge 0$ and $n=3m+1\equiv 1 \mod 3$. This means $d_{H}(u)=3m=n-1<n=g(n)$, a contradiction, too.
\end{proof}

\subsection{$GP_3$}
\begin{proof}[Proof of the lower bound of (3)]
We consider the same $3$-graph as mentioned in the proof of (1), i.e, consider a trivial intersecting family $G$ on $V(G)=\{0\}\cup[n-1]$ with edge set $E(G)=\{\{0\}\}\vee\binom{[n-1]}{2}$. Apparently, $\delta_1(G)=n-2$ and $G$ contains no copy of $GP_3$ covering $0$.
\end{proof}

\begin{proof}[Proof of the upper bound of (3)]
Let $H$ be a $3$-graph on $n\ge 14$ vertices and $\delta_1(H) \ge n-1$, Let $M\subset V(H)$ be the set of all vertices not covered by any copy of $GP_3$ in $H$. Take $u\in M$ with $d_{H}(u)\le d_{H}(v)$ for all $v\in M$.

In graph theory, a {\it cycle} of length $t$ is a graph on $[t]$ with edge set $\{12,23,\dots, (t-1)t,t1\}$. 
\begin{claim}\label{GP_3-1}
$H_u$ does not contain $K_{1,3} \cup K_2$ as a subgraph. Moreover, $H_u$ is a $2$-regular graph ($d_{H_u}(x)=2$ for any $x\in V(H_u)$), i.e. $H_u$ is the union of some vertex-disjoint cycles on $n-1$ vertices. 
\end{claim}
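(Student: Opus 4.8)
The plan is to prove the two assertions of the claim separately, using only the standing hypotheses $|E(H_u)|\ge\delta_1(H)\ge n-1$ and ``$u$ lies in no copy of $GP_3$''. The basic tool is a dictionary between copies of $GP_3=([6],\{123,234,456\})$ through $u$ and small configurations in $H$: since $\{1,2,3\}\cap\{4,5,6\}=\emptyset$, the vertex $u$ lies in at most two edges of such a copy, and enumerating cases one sees that two of these configurations are \textbf{(A)} a path $p\,q\,r$ in $H_u$ with middle vertex $q$, together with an edge $\{r,s,t\}\in E(H)$ satisfying $\{s,t\}\cap\{u,p,q\}=\emptyset$ (the copy being $\{upq,\ uqr,\ \{r,s,t\}\}$), and \textbf{(B)} a matching $\{a,b\},\{c,d\}$ in $H_u$ together with an edge $\{e,a,b\}\in E(H)$ with $e\notin\{u,c,d\}$ (the copy being $\{\{e,a,b\},\ uab,\ ucd\}$). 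So if $u$ is in no $GP_3$, neither (A) nor (B) can occur in $H$.

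For the first assertion, I would suppose $K_{1,3}\cup K_2\subseteq H_u$, say with star center $a$, leaves $b_1,b_2,b_3$, and an extra edge $\{c,d\}$ disjoint from $\{a,b_1,b_2,b_3\}$, so that $uab_1,uab_2,uab_3,ucd\in E(H)$. Feeding the matchings $\{a,b_i\},\{c,d\}$ into (B) shows every edge $\{e,a,b_i\}$ of $H$ has $e\in\{u,c,d\}$; feeding the two paths $b_j\,a\,b_i$ ($j\ne i$) into (A) shows every edge of $H$ through $b_i$ but not through $u$ either contains $a$ or equals $\{b_1,b_2,b_3\}$. Together these bound the number of edges through $b_i$ avoiding $u$ by $3$, so $d_{H_u}(b_i)\ge d_H(b_i)-3\ge n-4$ for each $i$. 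Hence the three sets $N_{H_u}(b_i)$ have size $\ge n-4$ inside $V(H_u)$, which has $n-1$ vertices, so for $n\ge 14$ they share a vertex $w$. I then claim every edge of $H$ through $w$ contains $u$: given $\{w,s,t\}\in E(H)$ with $u\notin\{s,t\}$, at most two of $b_1,b_2,b_3$ lie in $\{s,t\}$, so fix $b_i\notin\{s,t\}$; since $|N_{H_u}(b_i)|\ge n-4$ is large we may pick $w_1\in N_{H_u}(b_i)\setminus(\{w\}\cup\{s,t\})$, and then the path $w_1\,b_i\,w$ in $H_u$ with the edge $\{w,s,t\}$ is exactly an instance of (A) — a contradiction. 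Therefore $d_H(w)=d_{H_u}(w)\le n-2<\delta_1(H)$, the final contradiction, and $K_{1,3}\cup K_2\not\subseteq H_u$.

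For the second assertion, $|E(H_u)|\ge n-1$ on $n-1$ vertices gives average degree at least $2$, so it suffices to prove $\Delta(H_u)\le 2$. Let $v$ attain $\Delta:=\Delta(H_u)$ and assume $\Delta\ge 3$. If $\Delta\ge 5$, then any edge of $H_u$ avoiding $v$ meets $N_{H_u}(v)$ in at most two vertices, hence is disjoint from some $3$-subset of $N_{H_u}(v)$, producing a $K_{1,3}\cup K_2$; so every edge meets $v$ and $|E(H_u)|\le n-2$, impossible. If $\Delta=4$, the same reasoning forces every edge avoiding $v$ to lie inside $N_{H_u}(v)$, so $|E(H_u)|\le 4+\binom{4}{2}=10$; if $\Delta=3$, every edge avoiding $v$ meets $N_{H_u}(v)$, hence every edge of $H_u$ is incident to $N_{H_u}(v)$ and $|E(H_u)|\le\sum_{x\in N_{H_u}(v)}d_{H_u}(x)\le 9$. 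Both bounds contradict $n\ge 14$. So $\Delta(H_u)\le 2$, and combined with average degree at least $2$ this forces $d_{H_u}(x)=2$ for all $x$, i.e. $H_u$ is a disjoint union of cycles on its $n-1$ vertices (in particular $d_H(u)=n-1$).

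The delicate point is the first assertion: possessing a $K_{1,3}\cup K_2$ in the link is \emph{a priori} not enough to exhibit a $GP_3$ through $u$, because every relevant copy of $GP_3$ uses one edge of $H$ avoiding $u$ and no such edge is handed to us. The mechanism that makes it work is that forbidding $GP_3$ constrains the edges through each leaf $b_i$ so tightly that $b_1,b_2,b_3$ must have near-maximum degree in $H_u$, after which a vertex $w$ common to the three link-neighbourhoods is ``trapped'' — it lies on no edge of $H$ outside the link of $u$ — contradicting $\delta_1(H)\ge n-1$. For the second assertion the only care needed is to make the crude edge counts fall below $n-1$, which is exactly what the hypothesis $n\ge 14$ buys.
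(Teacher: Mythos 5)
Your proposal is correct, and for the main (first) assertion it takes a genuinely different route from the paper. Both proofs begin with the same local observation: if $H_u\supseteq K_{1,3}\cup K_2$ with centre $a$, leaves $b_1,b_2,b_3$ and extra edge $cd$, then the edges of $H$ through a leaf $b_i$ that avoid $u$ are forced into the tiny set $\{ab_ic,\ ab_id,\ b_1b_2b_3\}$ (your configurations (A) and (B) are exactly the forbidden patterns the paper exploits, and your verification of them is sound). From there the arguments diverge. The paper turns this into an \emph{upper} bound on $d_H(a_1)$: it needs an edge of $H-\{u\}$ through the centre $a$, and to guarantee one it invokes the extremal choice of $u$ as a vertex of minimum degree among all uncovered vertices (the set $M$); it then gets $d_H(a_1)\le 6<\delta_1(H)$. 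You instead read the same constraint as a \emph{lower} bound $d_{H_u}(b_i)\ge n-4$ on the link-degrees of the leaves, extract a common link-neighbour $w$ of $b_1,b_2,b_3$ (inclusion--exclusion, fine for $n\ge 14$), and show via configuration (A) that every edge of $H$ through $w$ must contain $u$, whence $d_H(w)\le n-2<\delta_1(H)$. This dispenses entirely with the minimality of $d_H(u)$ over $M$ — your argument works for an arbitrary uncovered vertex — whereas the paper's yields a sharper absolute conclusion ($d_H(a_1)\le 6$) but at the cost of the extremal-choice device. For the second assertion (2-regularity) your case split on $\Delta(H_u)\in\{3,4,\ge 5\}$ is essentially the paper's argument with the edge counts ($\le n-2$, $\le 10$, $\le 9$, all below $n-1$ for $n\ge 14$) made explicit; both are correct.
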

\begin{proof}
Suppose the contrary that there exist $a,a_1,a_2,a_3,b_1,b_2\in V(H_u)$ with $aa_1$, $aa_2$, $aa_3$, $b_1b_2\in E(H_u)$. 
Let $e\in E(H-\{u\})$ be an edge with $a_1\in e$.

If $e\neq a_1a_2a_3$, then one of $a_2$ and $a_3$, say $a_2$, has $a_2\notin e$. Then if $a\notin e$, $a_2ua$, $uaa_1$, $e$ form a copy of $GP_3$, a contradiction. Now suppose $e\neq a_1a_2a_3$, then $a\in e$. 
Then if $e\cap\{b_1,b_2\}=\emptyset$, $e$, $aa_1u$, $ub_1b_2$ form a copy of $GP_3$. 
Hence we can conclude that $e\in\{a_1a_2a_3, a_1ab_1, a_1ab_2\}$ and $d_{H-\{u\}}(a_1)\le 3$. Similarly, $d_{H-\{u\}}(a_2),d_{H-\{u\}}(a_3)\le 3$.

Recall that $d_{H}(u)\le d_{H}(v)$ for all $v\in M$. If $d_{H-\{u\}}(a)=0$, then all edges containing $a$ must also contain $u$, which means $a\in M$. However, this also implies that $d_{H}(a)=d_{H_u}(a)<|E(H_u)|=d_{H}(u)$ since $a\notin b_1b_2\in E(H_u)$, which leads to a contradiction by the minimality of $d_{H}(u)$.
Hence, $d_{H-\{u\}}(a)\ge 1$. So we can pick $f\in E(H-\{u\})$ with $a\in f$. Thus one of $a_1$, $a_2$, $a_3$, say $a_1$, has $a_1\notin f$. 
If $d_{H_u}(a_1)\ge 4$, then we can pick $c\in N_{H_u}(a_1)\backslash f$. Then $uca_1$, $ua_1a$, $f$ form a copy of $GP_3$. Thus, $d_{H_u}(a_1)\le 3$.
Therefore, $d_{H}(a_1)=d_{H-\{u\}}(a_1)+d_{H_u}(a_1)\le 3+3=6<\delta_1(H)$, a contradiction. 

Now $H_u$ is a $K_{1,3}\cup K_2$-free graph on $n-1$ vertices with at least $n-1$ edges. If $H_u$ does not contain a vertex of degree at least $3$, then it is easy to see that $H_u$ must be $2$-regular and we are done. 
Otherwise, pick $v\in V(H_u)$ with at least $3$ vertices $v_1, v_2, v_3\in N_{H_u}(v)$. 
Apparently, all the the edges in $H_u$ must incident with $V_0:=\{v,v_1,v_2,v_3\}$ or we get a copy of $K_{1,3}\cup K_2$. In other words, $N_{H_u}(x)\subset V_0$ for any $x\in V(H_u)\backslash V_0$.
Also note that $|E(H_u)|\ge n-1>\binom{5}{2}$, we have at least $6-4=2$ vertices, say $x_1$ and $x_2$,  other than $v$, $v_1$, $v_2$ and $v_3$ incident with at least one edge in $H_u$.
If $x_1v\in E(H_u)$ and some vertex in $V_0\backslash\{v_0\}$, say $v_1$ has $x_2v_1\in E(H_u)$, then $x_2v_1$, $vv_2$, $vv_3$, $vx_1\in E(H_u)$ form a copy of $K_{1,3}\cup K_2$, a contradiction. 
Thus, if $x_1v\in E(H_u)$, then $x_2v\in E(H_u)$, which then implies that $N_{H_u}(x)\subseteq\{v\}$ for any $x\in V(H_u)\backslash\{v\}$. This gives $|E(H_u)|\le n-2<n-1$, a contradiction. Hence, $N_{H_u}(x)\subset V_1=\{v_1,v_2,v_3\}$ for any $x\in V(H_u)\backslash V_0$.
If there exists some $i\in[3]$ with $v_ix_1,v_ix_2\in E(H_u)$, then $v_iv$, $v_ix_1$, $v_ix_2$ and $vv_j$ for some $j\neq i$ form a copy of $K_{1,3}\cup K_2$ in $H_u$, a contradiction. Thus, $|N_{H_u}(v_i)\backslash V_0|\le 1$ for $i\in [3]$, which gives $|E(H_u)|\le\binom{4}{2}+3=9<n-1$, a contradiction.
\end{proof}

\begin{claim}\label{GP_3-2}
For any cycle $C\subset H_u$ and edge $e\in E(H_u)$, we have $|V(C) \cap e|\in\{0,3\}$.
\end{claim}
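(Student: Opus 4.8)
We work under Claim~\ref{GP_3-1}, so $H_u$ is a vertex-disjoint union of cycles spanning $V(H)\setminus\{u\}$; thus every $w\ne u$ lies on a unique such cycle and has exactly two neighbours on it, and whenever $xy\in E(H_u)$ the triple $\{u,x,y\}$ is an edge of $H$. The plan is to show that if some edge $e\in E(H)$ with $u\notin e$ satisfies $1\le |V(C)\cap e|\le 2$, then $H$ contains a copy of $GP_3$ through $u$, contradicting the choice of $u$. (This is the whole content of the claim, since $|e|=3$ forces $|V(C)\cap e|\in\{0,1,2,3\}$.)

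Suppose first $|V(C)\cap e|=1$, say $e=\{a,b,c\}$ with $a\in V(C)$ and $b,c\notin V(C)$. Let $a_1$ be one of the two $C$-neighbours of $a$ and $a_1'$ the $C$-neighbour of $a_1$ other than $a$. Then $\{a_1',u,a_1\}$ and $\{u,a_1,a\}$ are edges of $H$ (from the $H_u$-edges $a_1'a_1$ and $a_1a$), and together with $e=\{a,b,c\}$ they form a copy of $GP_3$: the six vertices $a_1',u,a_1,a,b,c$ are pairwise distinct (the first three lie on $C$ or equal $u$, while $b,c\notin V(C)$), and the intersection pattern $\{a_1',u,a_1\}\cap\{u,a_1,a\}=\{u,a_1\}$, $\{u,a_1,a\}\cap\{a,b,c\}=\{a\}$, $\{a_1',u,a_1\}\cap\{a,b,c\}=\emptyset$ matches that of $GP_3$, namely $\{1,2,3\}\cap\{2,3,4\}=\{2,3\}$, $\{2,3,4\}\cap\{4,5,6\}=\{4\}$, $\{1,2,3\}\cap\{4,5,6\}=\emptyset$. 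This copy covers $u$, a contradiction.

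Now suppose $|V(C)\cap e|=2$, say $e=\{a,b,c\}$ with $a,b\in V(C)$ and $c\notin V(C)$. Since $c\ne u$, the vertex $c$ lies on some cycle $C'$ of $H_u$, and $C'\ne C$ because $c\notin V(C)$; let $c_1$ be a $C'$-neighbour of $c$ and $c_1'$ the $C'$-neighbour of $c_1$ other than $c$. Exactly as above, $\{c_1',u,c_1\}$, $\{u,c_1,c\}$, $\{c,a,b\}$ are edges of $H$ forming a copy of $GP_3$ through $u$ (as the edges $\{1,2,3\},\{2,3,4\},\{4,5,6\}$ respectively); the six vertices are distinct because $c_1',c_1\in V(C')$ and $c\notin V(C)$, so none of them equals $a$ or $b\in V(C)$, while $u$ lies on no cycle. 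Again a contradiction, which completes the proof.

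The only point that needs care is the second case. The natural attempt — building $GP_3$ from the cycle structure at $a$ or $b$ — forces a split according to whether $a$ and $b$ are consecutive on $C$, and when $C$ is a $4$-cycle with $a,b$ antipodal that route breaks down (every vertex of $C$ gets ``used up'', and one is pushed into invoking the minimum-degree bound again). Routing the construction through the cycle containing $c$ avoids all of this, since that cycle is automatically disjoint from $C$; beyond Claim~\ref{GP_3-1} (i.e.\ ``$H_u$ is $2$-regular''), no further hypothesis is needed.
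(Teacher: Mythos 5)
Your proof is correct and follows essentially the same route as the paper: for $|V(C)\cap e|=1$ you build the $GP_3$ from two consecutive cycle-edges through $u$ together with $e$, and for $|V(C)\cap e|=2$ you pass to the cycle $C'$ containing the third vertex of $e$, which is exactly the paper's reduction of the second case to the first (you merely write out the resulting copy explicitly, and you correctly read the statement's $e$ as a $3$-edge of $H$ avoiding $u$, as the paper's own proof does). No gaps; nothing further is needed.
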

\begin{proof}
Suppose $|V(C) \cap e|=1$ firstly.
Let $V(C)=\{c_1,c_2,...,c_\ell\}$, $E(C)=\{c_1c_2$, $c_2c_3$, $\dots$, $c_{\ell-1}c_l$, $c_\ell c_1\}$ and let $e=c_1xy$ where $x,y \notin C$. Then $c_1c_2c_3$, $c_{\ell-1}c_\ell c_1$, $c_1xy$ form a copy of $GP_3$ covering $u$, a contradiction. So $|V(C) \cap e|\neq 1$ for any cycle $C\subset H_u$.
If $|V(C) \cap e|=2$, then there must exist another cycle $C'$ with $|V(C')\cap e|=3-2=1$, a contradiction. 
\end{proof}
Pick a cycle $C_0$ with $V(C_0)=\{c_1,c_2,\dots,c_\ell\}$ and $E(C_0)=\{c_1c_2,c_2c_3,\dots ,c_{\ell-1}c_\ell,c_\ell c_1\}$.

If $\ell=|V(C_0)|\ge 7$, we pick an edge $e$ with $e\cap V(C_0)\neq\emptyset$ (such edge exists since the degree of vertex in $V(C_0)$ should be more than $2$ in $H$). Then $|e \cap V(C_0)|=3$ by Claim~\ref{GP_3-2}. Suppose $e=\{c_i, c_j, c_k\}$ with $1\le i<j<k\le\ell$. By Pigeonhole Principle, one of $d_1=j-i$, $d_2=k-j$, $d_3=\ell+i-k$, say $d_1$, has $d_1\ge\lceil\ell/3\rceil\ge3$.
This means $j-i\ge 3$.
Without loss of generality, suppose $i=1$, so $k>j\ge 4$.
Then  $c_3uc_2$, $uc_2c_1$, $e$ form a copy of $GP_3$ covering $u$.

Therefore, $|V(C_0)|\le 6$.
Pick $v\in V(C_0)$. Note that any edge $e$ containing $v$ must have $|e\cap V(C_0)|=3$, which implies that $d_{H}(v)\le 2+\binom{|V(C_0)|-1}{2}\le 12<n-1\le \delta_1(H)$. This is a contradiction.
\end{proof}

\subsection{$K_{1,1,3}$}
\begin{proof}[Proof of the lower bound of (4)]
Let $W$ be a $3$-graph on $[n]$ and let $\mathcal{C}=\{12, 23, \dots,$ $(n-2)(n-1), (n-1)1\}$. The edge set of $W$ is 
$$E(W)=\left(\{\{n\}\}\vee\mathcal{C}\right)\cup\left\{\{i,j,k\}\in\binom{[n-1]}{3}: \binom{\{i,j,k\}}{2}\cap\mathcal{C}=\emptyset\right\}.$$
It is easy to see that $d_{W}(n)=n-1$ and $d_{W}(i)=\binom{n-4}{2}-(n-5)+2\ge n-1$ for $i\in[n-1]$ since $n\ge 9$. Hence $\delta_1(W)=n-1$. Also, one can check that there is no copy of $K_{1,1,3}$ covering the vertex $n$. 
\end{proof}
\begin{proof}[Proof of the upper bound of (4)]
Suppose the contrary that there is a $3$-graph $H$ on $n\ge 9$ vertices with $\delta_1(H)\ge n$ and $u\in V(H)$ is not contained in any copy of $K_{1,1,3}$ in $H$. 
Then the degree of any vertex in $H_u$ must be at most $2$. Otherwise, suppose $d_{H_u}(v)\ge 3$ for some $v\in V(H_u)$. Pick $x,y,z\in N_{H_u}(v)$, we get the three edges $uvx,uvy,uvz$ in $H$ which form a $K_{1,1,3}$ in $H$, a contradiction. Thus, $d_{H_u}(v)\le 2$ for any $v\in V(H_u)$.
Note that $V(H_u)=n-1$ and $|E(H_u)|\ge\delta_1(H)\ge n$. By Handshaking Lemma, $2(n-1)\ge\sum_{v\in V(H_u)}d_{H_u}(v)=2|E(H_u)|\ge 2n$, a contradiction
\end{proof}

\subsection{$S_3$}
\begin{proof}[Proof of the lower bound of (5)]
Let $S$ be a $3$-graph on $[n]$ with the edge set
$$E(S)=\left(\{\{n-1\}\}\vee\binom{\{n-2,n-3\}}{1}\vee\binom{[n-4]}{1}\right)\cup\left(\{\{n\}\}\vee\binom{[n-2]}{2}\right).$$
Note that $n\ge 11$. It is easy to check that $d_{S}(n)=\binom{n-2}{2}>n-1$, $d_{S}(n-1)=2(n-4)>n-1$, $d_{S}(n-2)=d_{S}(n-3)=2n-7>n-1$ and $d_{S}(i)=n-1$ for $i\in[n-4]$. This means $\delta_1(S)=n-1$. Also, $S$ has no copy of $S_3$ covering the vertex $n-1$.
\end{proof}

Before the proof of the upper bound, we firstly put the famous Tutte-Berge Theorem here.
\begin{lem}[\cite{Tutte-Berge1}, see also \cite{Tutte-Berge2}]\label{Tutte-Berge}
A graph $G$ is $(s+1)K_2$-free if and only if there is a set $B\subset V(G)$, such that the vertex sets of all the connected components $G_1,\cdots,G_m$ of $G-B$ have $|V(G_i)|\equiv 1 \mod 2$ ($i\in[m]$), and we have,
$$|B|+\sum_{i=1}^{m}\frac{|V(G_i)|-1}{2}=s\quad\mbox{  and  }\quad |B|+\sum_{i=1}^m|V(G_i)|=n\mbox{.}$$
\end{lem}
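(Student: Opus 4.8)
The \emph{if} direction is immediate and is worth recording first. Write $v(B):=|B|+\sum_{i=1}^m\frac{|V(G_i)|-1}{2}$ for the quantity appearing in the lemma. Given such a $B$, any matching $M$ of $G$ has at most $|B|$ edges meeting $B$ (each vertex of $B$ lies in at most one edge of $M$) and at most $\lfloor|V(G_i)|/2\rfloor=\frac{|V(G_i)|-1}{2}$ edges inside each odd component $G_i$, so $|M|\le v(B)=s$; hence $G$ is $(s+1)K_2$-free. (Note the statement is meant with the implicit hypothesis $s\le|V(G)|$ — otherwise $G=K_2$, $s=5$ is a counterexample — and reads most cleanly as: $\nu(G)\le s\le n$ if and only if such a $B$ exists, where $\nu(G)$ is the size of a maximum matching.)

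For the \emph{only if} direction I would argue by induction on $n=|V(G)|$, using as the sole external input the classical Tutte--Berge formula $\max_{U\subseteq V(G)}(o(G-U)-|U|)=n-2\nu(G)$, where $o(\cdot)$ counts odd components. The case $n\le 1$ is trivial. If $\nu(G)<s$, delete an arbitrary vertex $v$, apply the induction hypothesis to $G-v$ with parameter $s-1$ (valid since $\nu(G-v)\le\nu(G)\le s-1\le n-1$) to get $B'$, and set $B=B'\cup\{v\}$: the components of $G-B$ are those of $(G-v)-B'$, still all odd, and $v(B)=v(B')+1=s$. If $G$ is disconnected with components $D_1,\dots,D_r$, apply induction to each $D_i$ with parameter $\nu(D_i)$ and take the union of the resulting sets; here $\sum_i\nu(D_i)=\nu(G)=s$.

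It remains to treat $G$ connected with $\nu(G)=s$. Fix a barrier $U_0$ attaining the Tutte--Berge maximum; writing $C_1,\dots,C_p$ for the components of $G-U_0$, a short computation gives $|U_0|+\sum_i\lfloor|C_i|/2\rfloor=\frac{n-(o(G-U_0)-|U_0|)}{2}=s$, and the chain $\nu(G)\le|U_0|+\sum_i\nu(C_i)\le|U_0|+\sum_i\lfloor|C_i|/2\rfloor=\nu(G)$ forces $\nu(C_i)=\lfloor|C_i|/2\rfloor$ for every $i$. If $U_0\ne\emptyset$, each $C_i$ is strictly smaller than $G$, so induction applied to $C_i$ with parameter $\nu(C_i)$ yields sets $B_i$, and $B=U_0\cup\bigcup_i B_i$ has all components of $G-B$ odd with $v(B)=|U_0|+\sum_i\nu(C_i)=s$. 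The only remaining case is $U_0=\emptyset$, where $o(G)=n-2s$ and connectivity forces $o(G)\le1$, so $s=\lfloor n/2\rfloor$ and $G$ has a (near-)perfect matching: if $n$ is even take $B=\{v\}$ for a non-cut vertex $v$ (so $G-v$ is a single odd component); if $n$ is odd and $G$ is factor-critical take $B=\emptyset$; if $n$ is odd and $G$ is not factor-critical pick $w$ with $\nu(G-w)=\nu(G)-1$, recurse on $G-w$ with parameter $\nu(G)-1$, and set $B=\{w\}\cup B'$. In each subcase one checks directly that $v(B)=s$.

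I expect the only real friction to be this last subcase ($G$ connected, $U_0=\emptyset$), where the induction does not visibly shrink and one must invoke the structure of (near-)perfect and factor-critical matchings rather than recursion; everything else is bookkeeping, modulo the routine verifications that the various gluings keep all components odd and preserve the target value. More honestly, the lemma is just a repackaging of the Tutte--Berge formula, so in a write-up one would either run this short induction or simply cite the classical theorem, as the excerpt does.
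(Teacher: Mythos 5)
The paper gives no proof of Lemma~\ref{Tutte-Berge} at all: it is quoted as a known form of Berge's theorem from \cite{Tutte-Berge1,Tutte-Berge2}, so there is no internal argument to measure yours against; what you have done is supply the missing derivation from the classical Tutte--Berge formula, and it is correct. The counting in the ``if'' direction is right, and your induction for the ``only if'' direction handles the real subtlety correctly: a barrier $U_0$ attaining $\max_{U}(o(G-U)-|U|)=n-2\nu(G)$ may leave even components, so one cannot simply take $B=U_0$, and your recursion into the components $C_i$ (forced to satisfy $\nu(C_i)=\lfloor|V(C_i)|/2\rfloor$ by your equality chain) is exactly what repairs this. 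Your caveat that the statement as printed needs the implicit hypothesis $s\le n$ (your $K_2$, $s=5$ example) is also accurate; it is harmless for the paper, which applies the lemma only with $s=2$ on the $(n-1)$-vertex graph $H_u$ with $n-1\ge 10$. Two minor simplifications: in your last subcase ($G$ connected, $U_0=\emptyset$, $n$ odd) the choice $B=\emptyset$ already works, since $G$ itself is a single odd component and $|B|+\frac{n-1}{2}=s$, so the split into factor-critical and non-factor-critical graphs is superfluous (though not wrong); and the second displayed identity $|B|+\sum_i|V(G_i)|=n$ holds automatically for every $B$, so none of your gluing steps actually needs to verify it.
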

\begin{proof}[Proof of the upper bound of (5)]
Suppose the contrary that $H$ is a $3$-graph on $n\ge 11$ vertices with $\delta_1(H)\ge n$ and $u\in V(H)$ is not contained in any copy of $S_3$ in $H$. 
Note that there is no copy of $3K_2$ in $H_u$. Ohterwise, let $\{a_1a_2, b_1b_2, c_1c_2\}\subset H_u$ be a copy of $3K_2$, then$\{ua_1a_2, ub_1b_2, uc_1c_2\}$ is a copy of $S_3$ in $H$, a contradiction.
Hence, we can use Lemma~\ref{Tutte-Berge} to obtain a set $B\subset V(H_u)$. Then all the components $G_1,\dots G_m$ of $G-B$ have $|V(G_i)|\equiv 1 \mod 2$ ($i\in[m]$), and
$$|B|+\sum_{i=1}^{m}\frac{|V(G_i)|-1}{2}=2\quad\mbox{  and  }\quad |B|+\sum_{i=1}^m|V(G_i)|=n-1\ge 10\mbox{.}$$
Without loss of generality, let $|V(G_1)|\ge|V(G_2)|\ge\dots\ge|V(G_m)|$.
Thus $|B|\le 2$. Also, $H_u\subset K[B]\cup K[B,V(H_u)-B]\cup \sum_{i=1}^{m}K[V(G_i)]$. 
\begin{claim}\label{S_3-1}
$1\le|B|\le2$.
\end{claim}
\begin{proof}
If $|B|=0$, then $E(H_u)\subset \sum_{i=1}^{m}E(K[V(G_i)])$ and $\sum_{i=1}^{m}\frac{|V(G_i)|-1}{2}=2$. Note that $\frac{|V(G_i)|-1}{2}$ is a non-negative integer for any $i\in [m]$. so it is easy to see that either $|V(G_1)|=5$ and $|V(G_j)|=1$ for $j>1$ or $|V(G_1)|=|V(G_2)|=3$ and $|V(G_j)|=1$ for $j>2$. This implies $d_{H}(u)=|E(H_u)|=\sum_{i=1}^{m}|E(K[V(G_i)])|\le 10<n\le \delta_1(H)$, a contradiction. This gives $1\le |B|\le 2$.
\end{proof}
\begin{claim}\label{S_3-2}
For any edge $xy\in E(H_u)$, there is no copy of $2K_2$ in $H_x-\{u,y\}$. Moreover, $|E(H_x-\{u,y\})|\le n-4$.
\end{claim}
\begin{proof}
If there exists a set of two disjoint edges $\{a_1a_2, b_1b_2\}\subset E(H_x-\{u,y\})$ as a $2K_2$ in $H_x-\{u,y\}$, then the three edges $xyu, xa_1a_2, xb_1b_2\in E(H)$ form a copy of $S_3$, a contradiction.
Hence, the only non-empty component of $H_x-\{u,y\}$ must be a $K_3$ or a $K_{1,t}$ for some $1\le t\le n-4$. This gives $|E(H_x-\{u,y\})|\le n-4$. .
\end{proof}
\begin{claim}\label{S_3-3}
Let $v\in V(H_u)$ and $d_{H_u}(v)\ge 5$. Pick any two vertices $x,y\in N_{H_u}(v)$. If $d_{H_x-\{u\}}(v)\ge 1$, then $d_{H_y-\{u\}}(v)\le 1$. Moreover, $\max\{d_{H_u}(x),d_{H_u}(y)\} \ge 3$.

\end{claim}
\begin{proof}
Otherwise, suppose $d_{H_x-\{u\}}(v)\ge 1$ and $d_{H_y-\{u\}}(v)\ge 2$. then we can pick an edge $va_1\in H_x-\{u\}$ and another edge $va_2\in H_y-\{u\}$ with $a_2\neq a_1$. Since $d_{H_u}(v)\ge 5$, we can also pick a vertex $a_3\in N_{H_u}(v)$ with $a_3\neq a_1,a_2,x,y$. Then the three edges $va_1x, va_2y, va_3u\in E(H)$ form a copy of $S_3$, a contradiction. 

To prove $\max\{d_{H_u}(x),d_{H_u}(y)\}\ge 3$, note that $d_{H}(z)=d_{H_u}(z)+d_{H_z-\{u\}}(v)+|E(H_z-\{u,v\})|$ for $z\in\{x,y\}$. By Claim~\ref{S_3-2}, $|E(H_z-\{u,v\})|\le n-4$ for $z=x,y$.
Hence, for $z\in\{x,y\}$,
$$n\le \delta_1(H)\le d_{H}(z)\le n-4+d_{H_u}(z)+d_{H_z-\{u\}}(v).$$
Now if $d_{H_u}(z)\le 2$ for $z=x,y$, then $n\le n-2+d_{H_z-\{u\}}(v)$, which means $d_{H_z-\{u\}}(v)\ge 2$ for $z=x,y$. This is impossible by the proof above.

\end{proof}

Now by Claim~\ref{S_3-1}, $1\le |B|\le 2$. 

If $|B|=1$, let $B=\{v\}$. By $E(H_u)\subset K[B,V(H_u)-B]\cup \sum_{i=1}^{m}K[V(G_i)]$ and $\sum_{i=1}^{m}\frac{|V(G_i)|-1}{2}=1$, we have $|V(G_1)|=3$, $|V(G_j)|=1$ for $j>1$ and $E(H_u)=$ $E(H_u[B,V(H_u)-B])\cup E(G_1)$. 
Since $E(H_u)=d_{H}(u)\ge\delta_1(H)\ge n$, $d_{H_u}(v)=|E(H_u[B,V(H_u)-B])|\ge n-|K{V(G_1)}|=n-3>5=2+3$. Thus, we can pick two vertices $x,y\in N_{H_u}(v)\backslash V(G_1)$. Then $d_{H_u}(x)=d_{H_u}(y)=1$, contradicts to $\max\{d_{H_u}(x), d_{H_u}(y)\}\ge 3$ by Claim~\ref{S_3-3}.

If $|B|=2$, let $B=\{v_1,v_2\}$.  Similarly, we get $|V(G_j)|=1$ for any $j\in[m]$ and $E(H_u)=E(G[B])\cup E(G[B,V(H_u)-B])$. This means $d_{H_u}(z)\le 2$ for any $x\in V(H_u)\backslash\{v_1,v_2\}$ and $11\le n\le\delta_1(H)\le |E(H_u)|\le d_{H_u}(v_1)+d_{H_u}(v_2)$. By Pigeonhole Principle, one of $v_1$ and $v_2$, say $v_1$, has $d_{H_u}(v_1)\ge\frac{11}{2}>5$. So we can pick two vertices $x,y\in N_{H_u}(v_1)\backslash\{v_2\}$ and get a contradiction similarly by Claim~\ref{S_3-3}.
\end{proof}

\subsection{$GS_3$}
\begin{proof}[Proof of the lower bound of (6)]
Consider the graph $F$ with vertex set $\{0\} \cup [n-1]$. Let $B_i=\{2i-1, 2i\}\cap[n-1]$, for $i\in[\lceil\frac{n-1}{2} \rceil]$ and $\mathcal{B}=\{B_i: i\in [\lfloor\frac{n-1}{2} \rfloor]\}$.
The edge set of $F$ is
    $$E(F)=(\{\{0\}\} \vee\mathcal{B})\cup\bigcup_{\{i,j,k\}\in\binom{[\lceil\frac{n-1}{2} \rceil]}{3}} \left( \binom{B_i}{1}\vee\binom{B_j}{1}\vee\binom{B_k}{1}\right).$$ 
Clearly, $\delta_1(F)=\lfloor (n-1)/2 \rfloor$, and there is no copy of $GS_3$ covering $0$.
\end{proof}

\begin{proof}[Proof of the upper bound of (6)]
    Suppose that $H$ is a $3$-graph on $n\ge 13$ vertices with $\delta_1(H)\ge\lfloor (n-1)/2 \rfloor+1\ge 7$ and $u$ is a vertex in $H$ not covered by $GS_3$. 
    Apparently, $H_u$ contains at least one vertex $x$ such that $d_{H_u}(x) \ge \frac{2(\lfloor (n-1)/2 \rfloor+1\rceil)}{n-1}=2$.

\begin{claim}\label{GS_3-1}
    $H_u$ contains no copy of $K_{1,2} \cup K_2$.
\end{claim}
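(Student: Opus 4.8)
The plan is to argue by contradiction inside the ongoing proof: we already have a vertex $u$ not covered by any copy of $GS_3$, and we have found a vertex $x\in V(H_u)$ with $d_{H_u}(x)\ge 2$. Suppose $H_u$ contained a copy of $K_{1,2}\cup K_2$, say with centre $a$ of the star, leaves $b_1,b_2$, and a disjoint edge $c_1c_2$, so that $ab_1,ab_2,c_1c_2\in E(H_u)$ on five distinct vertices. Then the three triples $uab_1$, $uab_2$, $uc_1c_2$ all lie in $E(H)$, and I claim they form a copy of $GS_3=([6],\{123,124,156\})$ covering $u$: identify $u\leftrightarrow 1$, $a\leftrightarrow 2$, $b_1\leftrightarrow 3$, $b_2\leftrightarrow 4$, $c_1\leftrightarrow 5$, $c_2\leftrightarrow 6$. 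The first two edges share the pair $\{u,a\}=\{1,2\}$ and the third shares only the vertex $u=1$ with each of them, and all six vertices are distinct, which is exactly the edge pattern $\{123,124,156\}$. This contradicts the assumption that $u$ is not covered by $GS_3$.

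First I would spell out that $K_{1,2}\cup K_2$, as a graph, is precisely three edges on five vertices in which two edges share a common vertex and the third is disjoint from both; this matches the link-graph picture of $GS_3$ exactly (the two edges of $GS_3$ through the pair $\{1,2\}$ contribute the path $2\text{-}1\text{-}2$... more precisely, in the link graph $H_u$ one recovers $GS_3$ through $u$ from a configuration where two edges of $H_u$ share the vertex playing the role of $2$ and the remaining edge of $H_u$ is vertex-disjoint from that configuration). Then the verification above that the three induced triples of $H$ form a $GS_3$ is a one-line check. The key step is simply making sure all five vertices $a,b_1,b_2,c_1,c_2$ are distinct — but this is built into the definition of $K_{1,2}\cup K_2$ as a vertex-disjoint union, so no extra work is needed.

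There is no real obstacle here: the claim is a pure forbidden-subgraph translation, and the content of the section lies in the subsequent structural analysis of the $K_{1,2}\cup K_2$-free graph $H_u$ (controlling its components and degree sequence, as in the $LP_3$ and $TP_3$ arguments) to force $d_H(v)$ too small for some vertex $v$. The only thing to be careful about in writing the proof is to state the contradiction cleanly — namely that the copy of $GS_3$ we produced contains $u$, contradicting the choice of $u$ — and to make the vertex identification with $([6],\{123,124,156\})$ explicit so the reader sees that the edge $c_1c_2$ of $H_u$ really does correspond to the edge $156$ of $GS_3$ and not to something overlapping the star.
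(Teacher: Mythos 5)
Your proof is correct and is essentially identical to the paper's: take the copy of $K_{1,2}\cup K_2$ in $H_u$ and lift its three edges to the three triples through $u$, which form a $GS_3$ covering $u$, contradicting the choice of $u$. The explicit vertex identification with $([6],\{123,124,156\})$ is just a more detailed write-up of the same one-line check.
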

\begin{proof}
    Assume that $\{x_1x_2,x_2x_3,y_1y_2\}$ is a copy of $K_{1,2} \cup K_2$ in $H_u$, then $ux_1x_2,ux_2x_3,uy_1y_2$ form a $GS_3$ covering $u$.
\end{proof}

\begin{claim}
    The only non-empty component of $H_u$ is a star.
\end{claim}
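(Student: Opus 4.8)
The plan is to bootstrap from Claim~\ref{GS_3-1}. We already know $H_u$ has no $K_{1,2}\cup K_2$, so I first claim that $H_u$ cannot contain two disjoint edges at all unless one of them is isolated in the following strong sense: if $\{y_1y_2, z_1z_2\}$ is a copy of $2K_2$ in $H_u$ and $H_u$ also has a vertex $x$ of degree at least $2$, then $x$ together with its two neighbors forms a $K_{1,2}$; this $K_{1,2}$ must meet $\{y_1,y_2,z_1,z_2\}$ in a way that kills at least one of $y_1y_2$, $z_1z_2$ — but since the $K_{1,2}$ has only $3$ vertices and the two disjoint edges have $4$, one of $y_1y_2$, $z_1z_2$ is entirely disjoint from $\{x\}\cup N_{H_u}(x)$ as long as the $K_{1,2}$ was chosen outside. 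Carrying this out carefully, the upshot is that every vertex of $H_u$ of degree $\ge 2$ must have \emph{all} its incident structure interacting with \emph{every} edge of $H_u$, which forces $H_u$ to have a very restricted shape.

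Concretely, the key steps I would carry out are: (i) Fix a vertex $x$ with $d_{H_u}(x)\ge 2$ (it exists by the degree count already noted). (ii) Show that if $d_{H_u}(x)\ge 2$ then $H_u$ has no edge disjoint from $\{x\}\cup N_{H_u}(x)$: otherwise pick two neighbors $x_1,x_2$ of $x$ and a disjoint edge $y_1y_2$, giving the forbidden $K_{1,2}\cup K_2$. (iii) Deduce that $V(H_u)\setminus(\{x\}\cup N_{H_u}(x))$ is an independent set in $H_u$ and every edge meets $\{x\}\cup N_{H_u}(x)$. (iv) Rule out the possibility that two vertices $a,b\in N_{H_u}(x)$ each have a further neighbor outside $\{x\}$: if $aa'$ and $bb'$ are such edges with $a'\neq x\neq b'$, then (choosing them to be disjoint or handling the overlap) one finds a $K_{1,2}\cup K_2$ centered at $x$ (using $xa$, $xb$ as the $K_{1,2}$ when possible, or rerouting) — so at most one vertex of $N_{H_u}(x)$ has degree $\ge 2$ in $H_u$, and if such a vertex $a$ exists its extra neighbors must coincide with $x$ or force a contradiction. (v) Conclude the only non-empty component is the star centered at $x$ (a $K_{1,t}$), possibly after noting a triangle $x,x_1,x_2$ would also give a $K_{1,2}$ plus a disjoint edge unless $H_u$ is tiny, which is excluded by $n\ge 13$ and $\delta_1(H)\ge 7$.

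The main obstacle I anticipate is the case analysis in step (iv): when the ``second'' edge at a neighbor $a$ of $x$ shares a vertex with $N_{H_u}(x)$ or with $x$ itself, the naive $K_{1,2}\cup K_2$ need not appear directly, and one has to re-select which two edges play the role of the $K_{1,2}$ and which plays the disjoint $K_2$. The safe way to organize this is to argue by contradiction at the level of components: suppose the non-empty component $D$ of $H_u$ is not a star; then $D$ contains a path on $4$ vertices or a triangle, and in either configuration, since $D$ has $\ge 2$ vertices outside it (because $|V(H_u)|=n-1\ge 12$ while a non-star bad configuration only pins down $\le 4$ vertices and the edge count forces more structure), we can always locate a vertex of degree $2$ whose star is vertex-disjoint from some other edge, producing $K_{1,2}\cup K_2$ and contradicting Claim~\ref{GS_3-1}. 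This reduces everything to the clean combinatorial fact that a connected graph with no $K_{1,2}\cup K_2$ and enough ``room'' must be a star, which is then immediate.
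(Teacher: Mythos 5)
Your overall strategy rests on exactly the same two pillars as the paper's proof — Claim~\ref{GS_3-1} together with a vertex of degree at least $2$ in $H_u$ — but two of your concrete steps fail as stated, and both fail for the same reason: you never actually bring the minimum-degree (edge-count) hypothesis into play, which is precisely what the paper uses to close the argument. In step (iv), the asserted conclusion that ``at most one vertex of $N_{H_u}(x)$ has degree $\ge 2$'' is simply not a consequence of $K_{1,2}\cup K_2$-freeness: if the edges of $H_u$ formed a triangle on $\{x,a,b\}$, or a $4$-cycle $x\,a\,c\,b$, then both neighbours of $x$ have a further neighbour outside $\{x\}$, yet no re-selection of edges can ever produce a $K_{1,2}\cup K_2$, because these configurations span at most $4$ vertices while $K_{1,2}\cup K_2$ needs $5$. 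So no purely local rerouting can finish this case; one must use that $H_u$ has more edges than any such small configuration can carry.

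The same problem undermines your fallback: the ``room'' you invoke is $|V(H_u)|=n-1\ge 12$ and the fact that a bad configuration pins down at most $4$ vertices, but vertices outside the non-empty component may all be isolated, and isolated vertices never supply the disjoint $K_2$ (a triangle plus many isolated vertices is $K_{1,2}\cup K_2$-free and is not a star). What actually excludes these graphs is the edge count: $|E(H_u)|=d_{H}(u)\ge\delta_1(H)\ge\lfloor (n-1)/2\rfloor+1\ge 7>\binom{4}{2}$, so the edges of $H_u$ cannot be supported on only $4$ vertices — this is exactly how the paper disposes of its $|N_{H_u}[x]|=3$ case. With that input your fallback does go through, but it is not ``immediate'': you still need to rule out two non-empty components (a $P_3$ in one plus an edge in another is forbidden, and a matching contradicts the degree-$2$ vertex), and to verify the small structural fact that a connected non-star graph on at least $5$ vertices contains a $P_4$ with an attached fifth vertex and hence a $K_{1,2}\cup K_2$. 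Once the edge count is inserted where it belongs, your argument becomes a mild reorganization of the paper's proof (which instead takes $x$ of maximum degree and splits on $|N_{H_u}[x]|\ge 4$ versus $|N_{H_u}[x]|=3$), rather than a genuinely different route.
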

\begin{proof}
   Suppose not and let $x$ be the vertex with maximum degree in $H_u$. Let $N_{H_u}[x]=N_{H_u}(x)\cup\{x\}$. Apparently, $|N_{H_u}[x]| \ge 3$ and any edge in $H_u$ shares at least one vertex in $N_{H_u}[x]$. Otherwise, there would be a copy of $K_{1,2} \cup K_2$ in $H_u$, which is a contradiction by Claim~\ref{GS_3-1}. So we can assume that all edges are incident with $N_{H_u}[x]$.
Suppose $N_{H_u}[x]=\{x,y_1,y_2,\dots, y_d\}$ where $d=d_{H_u}(x)\ge 2$.
    
    If $|N_{H_u}[x]|\ge 4$, pick an edge $wv$ with $x\notin wv$ (since $H_u$ is not a star), then $wv,xy_i,xy_j$ form a copy of $K_{1,2} \cup K_2$, where we pick $y_i,y_j\in N_{H_u}[x]\backslash\{w,v\}$. This is a contradiction.
If $|N_{H_u}[x]| = 3$, we have $\max\{d_{H_u}(y_1),d_{H_u}(y_2)\}\ge \lceil1+\frac{|E(H_u)|-2}{2}\rceil\ge 4$. Without loss of generality, suppose $d_{H_u}(y_1)\ge 4$. We can pick two vertices $z_1$and $z_2$ with $z_1,z_2\in N_{H_u}(y_1)\backslash\{x,y_2\}$. Then $y_1z_1,y_1z_2,xy_2$ form a copy of $K_{1,2} \cup K_2$, a contradiction. 
\end{proof}
    Now we can asuume that the only non-empty component of $H_u$ is $K[\{v\}, V_0]$ for some $v\in V(H_u)$ and $V_0\subset V(H_u)\backslash\{v\}$. Note that $|V_0|=d_{H}(u)\ge \lfloor (n-1)/2 \rfloor+1\ge 7$. 
If there exists an edge $e\in E(H-\{u\})$ with $v\in e$, we can pick $2$ vertices $v_1.v_2\in V_0\backslash e$. Hence we get a contradiction since $e$, $uvv_1$, $uvv_2$ form a copy of $GS_3$ covering $u$. 
    
    If there is no edge $e\in E(H-\{u\})$ with $v\in e$, then $d_{H}(\{u,v\})=d_{H}(u)=d_{H}(v)>0$ and $\delta_1(H-\{u,v\}) \ge \delta_1(H)-1 \ge  \lfloor (n-1)/2 \rfloor= \lfloor ((n-3)/2 \rfloor+1$. 
We now pick $w\in N_{H}(\{u,v\})$. Note that $|E((H-\{u,v\})_w)|\ge \delta_1(H-\{u,v\})\ge  \lfloor ((n-3)/2 \rfloor+1$. So we can find a vertex $x$ such that $d_{(H-\{u,v\})_w}(x) \ge \frac{2(\lfloor (n-3)/2 \rfloor+1\rceil)}{n-3}=2$. Pick $x_1, x_2\in N_{(H-\{u,v\})_w}(x)$, we get a copy of $GS_3$ in $H$ with edge set $\{uvw, wxx_1, wxx_2\}$ covering $u$. 
\end{proof}

\section{Concluding remarks}
In this paper, we determine the exact values of $c_1(F_5)$ and $c_1(n, F)$ for $F=LP_3$, $TP_3$, $K_{1,1,3}$, $S_3$, $GP_3$, $GS_3$. These results, together with some known ones, complete the $1$-degree thresholds for all possible coverings by a connected $3$-graph with $3$ edges.

For $3$-graphs $F$ with more than $3$ edges, however, we almostly have no non-trivial exact results for $c_1(F)$.

For the $2$-degree thresholds, one can easily check that:  $c_2(n, F)$ is a small constant for any mentioned connected $3$-graph $F$ with $3$ edges (except for $K_{4}^{(3)-}$ done by \cite{co-Our}). For example, 
\begin{itemize}
\item (Tang, Ma and Hou~\cite{Our}) For $n\ge 6$, $c_2(n, C_{6}^{(3)})=1$; 
\item (Gu, Wang~\cite{GW}) For $n\ge 5$, $c_2(n, F_5)\in\{1,2\}$ and $c_2(n, F_5)=2$ if and only if $n\equiv 1 \mod 3$ and $n\ge 10$; for $n\ge 8$, $c_2(n, LP_3)=1$; for $n\ge 7$, $c_2(n, S_3)\le 1$.
\end{itemize}
Hence, it seems to be more intresting to consider $c_1(n, F)$ and $c_1(F)$ than $c_{2}(n,F)$ and $c_2(F)$ for small $3$-graphs $F$.

\end{document}